\def\Ddots{\mathinner{\mkern1mu\raise\p@
\vbox{\kern7\p@\hbox{.}}\mkern2mu
\raise4\p@\hbox{.}\mkern2mu\raise7\p@\hbox{.}\mkern1mu}}
\def\XXint#1#2#3{{\setbox0=\hbox{$#1{#2#3}{\int}$}
\vcenter{\hbox{$#2#3$}}\kern-.5\wd0}}
\begin{document}
\newtheorem{theorem}{Theorem}
\newtheorem{proposition}[theorem]{Proposition}
\newtheorem{conjecture}[theorem]{Conjecture}
\def\theconjecture{\unskip}
\newtheorem{corollary}[theorem]{Corollary}
\newtheorem{lemma}[theorem]{Lemma}
\newtheorem{claim}[theorem]{Claim}
\newtheorem{sublemma}[theorem]{Sublemma}
\newtheorem{observation}[theorem]{Observation}
\theoremstyle{definition}
\newtheorem{definition}{Definition}
\newtheorem{notation}[definition]{Notation}
\newtheorem{remark}[definition]{Remark}
\newtheorem{question}[definition]{Question}
\newtheorem{questions}[definition]{Questions}
\newtheorem{example}[definition]{Example}
\newtheorem{problem}[definition]{Problem}
\newtheorem{exercise}[definition]{Exercise}
 \newtheorem{thm}{Theorem}
 \newtheorem{cor}[thm]{Corollary}
 \newtheorem{lem}{Lemma}[section]
 \newtheorem{prop}[thm]{Proposition}
 \theoremstyle{definition}
 \newtheorem{dfn}[thm]{Definition}
 \theoremstyle{remark}
 \newtheorem{rem}{Remark}
 \newtheorem{ex}{Example}
 \numberwithin{equation}{section}
\def\C{\mathbb{C}}
\def\R{\mathbb{R}}
\def\Rn{{\mathbb{R}^n}}
\def\Rns{{\mathbb{R}^{n+1}}}
\def\Sn{{{S}^{n-1}}}
\def\M{\mathbb{M}}
\def\N{\mathbb{N}}
\def\Q{{\mathbb{Q}}}
\def\Z{\mathbb{Z}}
\def\F{\mathcal{F}}
\def\L{\mathcal{L}}
\def\S{\mathcal{S}}
\def\supp{\operatorname{supp}}
\def\essi{\operatornamewithlimits{ess\,inf}}
\def\esss{\operatornamewithlimits{ess\,sup}}

\numberwithin{equation}{section}
\numberwithin{thm}{section}
\numberwithin{theorem}{section}
\numberwithin{definition}{section}
\numberwithin{equation}{section}

\def\earrow{{\mathbf e}}
\def\rarrow{{\mathbf r}}
\def\uarrow{{\mathbf u}}
\def\varrow{{\mathbf V}}
\def\tpar{T_{\rm par}}
\def\apar{A_{\rm par}}

\def\reals{{\mathbb R}}
\def\torus{{\mathbb T}}
\def\scriptm{{\mathcal T}}
\def\heis{{\mathbb H}}
\def\integers{{\mathbb Z}}
\def\z{{\mathbb Z}}
\def\naturals{{\mathbb N}}
\def\complex{{\mathbb C}\/}
\def\distance{\operatorname{distance}\,}
\def\support{\operatorname{support}\,}
\def\dist{\operatorname{dist}\,}
\def\Span{\operatorname{span}\,}
\def\degree{\operatorname{degree}\,}
\def\kernel{\operatorname{kernel}\,}
\def\dim{\operatorname{dim}\,}
\def\codim{\operatorname{codim}}
\def\trace{\operatorname{trace\,}}
\def\Span{\operatorname{span}\,}
\def\dimension{\operatorname{dimension}\,}
\def\codimension{\operatorname{codimension}\,}
\def\nullspace{\scriptk}
\def\kernel{\operatorname{Ker}}
\def\ZZ{ {\mathbb Z} }
\def\p{\partial}
\def\rp{{ ^{-1} }}
\def\Re{\operatorname{Re\,} }
\def\Im{\operatorname{Im\,} }
\def\ov{\overline}
\def\eps{\varepsilon}
\def\lt{L^2}
\def\diver{\operatorname{div}}
\def\curl{\operatorname{curl}}
\def\etta{\eta}
\newcommand{\norm}[1]{ \|  #1 \|}
\def\expect{\mathbb E}
\def\bull{$\bullet$\ }

\def\blue{\color{blue}}
\def\red{\color{red}}

\def\xone{x_1}
\def\xtwo{x_2}
\def\xq{x_2+x_1^2}
\newcommand{\abr}[1]{ \langle  #1 \rangle}

\newcommand{\Norm}[1]{ \left\|  #1 \right\| }
\newcommand{\set}[1]{ \left\{ #1 \right\} }
\newcommand{\ifou}{\raisebox{-1ex}{$\check{}$}}
\def\one{\mathbf 1}
\def\whole{\mathbf V}
\newcommand{\modulo}[2]{[#1]_{#2}}
\def \essinf{\mathop{\rm essinf}}
\def\scriptf{{\mathcal F}}
\def\scriptg{{\mathcal G}}
\def\scriptm{{\mathcal M}}
\def\scriptb{{\mathcal B}}
\def\scriptc{{\mathcal C}}
\def\scriptt{{\mathcal T}}
\def\scripti{{\mathcal I}}
\def\scripte{{\mathcal E}}
\def\scriptv{{\mathcal V}}
\def\scriptw{{\mathcal W}}
\def\scriptu{{\mathcal U}}
\def\scriptS{{\mathcal S}}
\def\scripta{{\mathcal A}}
\def\scriptr{{\mathcal R}}
\def\scripto{{\mathcal O}}
\def\scripth{{\mathcal H}}
\def\scriptd{{\mathcal D}}
\def\scriptl{{\mathcal L}}
\def\scriptn{{\mathcal N}}
\def\scriptp{{\mathcal P}}
\def\scriptk{{\mathcal K}}
\def\frakv{{\mathfrak V}}
\def\C{\mathbb{C}}
\def\D{\mathcal{D}}
\def\R{\mathbb{R}}
\def\Rn{{\mathbb{R}^n}}
\def\rn{{\mathbb{R}^n}}
\def\Rm{{\mathbb{R}^{2n}}}
\def\r2n{{\mathbb{R}^{2n}}}
\def\Sn{{{S}^{n-1}}}
\def\M{\mathbb{M}}
\def\N{\mathbb{N}}
\def\Q{{\mathcal{Q}}}
\def\Z{\mathbb{Z}}
\def\F{\mathcal{F}}
\def\L{\mathcal{L}}
\def\G{\mathscr{G}}
\def\ch{\operatorname{ch}}
\def\supp{\operatorname{supp}}
\def\dist{\operatorname{dist}}
\def\essi{\operatornamewithlimits{ess\,inf}}
\def\esss{\operatornamewithlimits{ess\,sup}}
\author{Qingying Xue}
\address{
         School of Mathematical Sciences \\
         Beijing Normal University \\
         Laboratory of Mathematics and Complex Systems \\
         Ministry of Education \\
         Beijing 100875 \\
         People's Republic of China}
\email{qyxue@bnu.edu.cn}

\author{K\^{o}z\^{o} Yabuta}
\address{K\^{o}z\^{o} Yabuta
\\ Research Center for Mathematical Sciences
\\
Kwansei Gakuin University
\\
Gakuen 2-1, Sanda 669-1337
\\
Japan } \email{kyabuta3@kwansei.ac.jp}
\thanks{
The first author was partly supported by NSFC (Nos. 11471041, 11671039) and NSFC-DFG (No. 11761131002). The third
named author was supported partly by Grant-in-Aid for Scientific Research (C) Nr.
15K04942, Japan Society for the Promotion of Science.\\
 \indent Corresponding author: Qingying Xue \indent Email: qyxue@bnu.edu.cn}
\author{Jingquan Yan}
\address{Jingquan Yan\\
         School of Mathematical and Computational Science\\ Anqing Normal University\\ Anqing, 246133\\ P. R. China}
\email{yjq20053800@yeah.net}
\subjclass[2010]{Primary: 42B20; Secondary: 47B07, 42B35,
47G99}

\keywords{Fr\'{e}chet-Kolmogorov theorem, vector-valued multilinear operators, compactness, commutators}

\date{\today}
\title[Weighted Fr\'{e}chet-Kolmogorov theorem and compactness ]{Weighted Fr\'{e}chet-Kolmogorov theorem and compactness of vector-valued multilinear operators}
\maketitle

\begin{abstract}In this paper, we establish a weighted version of the well-known Fr\'{e}chet-Kolmogorov theorem, which holds for weights beyond $A_\infty$. This weighted theory extends the previous known unsatisfactory results in the terms of relaxing the index to the natural range. As applications, we obtain the weighted compactness theory for the commutators of multilinear vector-valued Calder\'{o}n-Zygmund type operators, including the commutators of multilinear Littlewood-Paley type operators. It is worthy to pointing out that the commutators we considered contain almost all the commutators formerly studied in this literature.
\end{abstract}

\section{Introduction}
 The well-known {Fr\'{e}chet-Kolmogorov theorem (\cite{R}, \cite[p. 275]{Yo}) was first proved by Riesz \cite{R} in 1933. It states that:\medskip
 	 	
 	\quad\hspace{-20pt}{\bf Theorem A} (\cite{R}, \cite[p. 275]{Yo}). Let $1\le p<\infty$ and $F$ be a subset in $L^p(\R^n)$. $F$ is sequentially compact in $L^p(\R^n)$ if and only if the following three conditions are satisfied:
		\begin{enumerate}[{\rm(i)}]
			\item 
			$\sup\limits_{f\in F}\|f\|_{L^p}< \infty$;
			\item 
			$\lim\limits_{N\rightarrow\infty}\sup\limits_{f\in F}\int_{|x|\geq N}|f(x)|^pdx=0;$
			\item 
			$\lim\limits_{|t|\rightarrow0}\sup\limits_{f\in F}\int_{\mathbb{R}^n}|f(\cdot+t)-f(\cdot)|^pdx=0.$
		\end{enumerate}
The weighted Fr\'{e}chet-Kolmogorov theorem was given by Clop and Cruz in \cite{ClC}.
\medskip

\quad\hspace{-20pt}{\bf Theorem B} (\cite{ClC}).\label{thm-comp-lpomega-clop}
	Let $1\leq p<\infty$, $w\in A_p$, $F\subset L^p(w)$. Then $F$ is a compact set
	in $L^p(w)$ if and only if the following three conditions hold:
	\begin{enumerate}
		\item[\emph{(i)}]$\sup_{f\in F}\|f\|_{L^p(w)}<\infty;$
		\item[\emph{(ii)}]$\lim_{N\rightarrow\infty}\sup_{f\in F}\int_{|x|\geq N}|f(x)|^pw(x)dx=0;$
		\item[\emph{(iii)}]$\lim_{|t|\rightarrow0}\sup_{f\in F}\int_{\mathbb{R}^n}|f(\cdot+t)-f(\cdot)|^pw(x)dx=0.$
	\end{enumerate}

It was actually only pointed out in \cite{ClC} that conditions (i)-(iii) in Theorem B were sufficient for the set $A$ to be a compact set in $L^p(\omega)$ when $p>1$. But it is easy to see that their argument also holds for $p=1$. 
However, the necessity part (iii) in Theorem B does not follow, in general. 
Later we shall give examples showing unnecessity of the condition (iii). 

\medskip

It is natural to ask whether  Fr\'{e}chet-Kolmogorov theorem is true or not for $0<p<1$.
In 1951, Tsuji \cite{Ts} showed that the unweighted
Fr\'{e}chet-Kolmogorov theorem can be extended to $0<p<1$. This article hasn't received much attention for a long time.
Based on this somehow unnoticed paper \cite{Ts}, the first aim of this paper is  to show that the weighted Fr\'{e}chet-Kolmogorov theorem can also be extended to the case $0<p<1$. Moreover, we actually showed that the following weighted Fr\'{e}chet-Kolmogorov theorem holds even for more general weights than $A_\infty$, which could be of interests in its own.
\begin{thm}\label{thm-comp-lpomega}
	Let $w$ be a weight on $\R^n$. Assume that $w^{-1/(p_0-1)}$ is also a weight
	on $\R^n$ for some $p_0>1$. Let $0<p<\infty$ and $F$ be a subset in $L^p(w)$.
	Then $F$ is sequentially compact in $L^p(w)$ if the following three
	conditions are satisfied:
	\begin{enumerate}[{\rm(i)}]
		\item 
		$\sup\limits_{f\in F}\|f\|_{L^p(w)}< \infty$;
		\item 
		$\lim\limits_{N\rightarrow\infty}\sup\limits_{f\in F}\int_{|x|\geq N}|f(x)|^pw(x)dx=0;$
		\item 
		$\lim\limits_{|t|\rightarrow0}\sup\limits_{f\in F}\int_{\mathbb{R}^n}|f(\cdot+t)-f(\cdot)|^pw(x)dx=0.$
	\end{enumerate}
\end{thm}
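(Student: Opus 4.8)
The plan is to reduce the weighted statement to the classical (unweighted) Fréchet–Kolmogorov theorem in the extended range $0<p<\infty$ (due to Tsuji), via a change of density. The key observation is that the hypothesis ``$w^{-1/(p_0-1)}$ is a weight'' means $w\in A_{p_0}$-\emph{type} in the weak sense that both $w$ and its dual exponent weight are locally integrable, which is exactly what is needed to guarantee that $L^p(w)\subset L^1_{\mathrm{loc}}$ and that truncation/mollification operators behave well. So I would first record the elementary consequence: for any ball $B$, Hölder's inequality with exponents $p_0$ and $p_0'$ gives $\int_B |f| \leq \big(\int_B |f|^{p_0} w\big)^{1/p_0}\big(\int_B w^{-p_0'/p_0}\big)^{1/p_0'}<\infty$ when $f\in L^{p_0}(w)$, and a further interpolation/Hölder step upgrades this to all $f\in L^p(w)$ with $0<p<\infty$ (splitting $|f|^p = |f|^p w \cdot w^{-1}$ on the region where $|f|$ is large, and handling $|f|\le 1$ trivially on a bounded set). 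This shows $F\subset L^1_{\mathrm{loc}}(\R^n)$ with locally uniform bounds, which is what lets us mollify.

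For the \emph{necessity} direction ($F$ sequentially compact $\Rightarrow$ (i)--(iii)): (i) is immediate since a compact set is bounded; (ii) and (iii) follow from a standard $\eps/3$ argument — cover $F$ by finitely many $L^p(w)$-balls of radius $\eps$ centered at $f_1,\dots,f_k$, and for each fixed $f_j$ verify that $\int_{|x|\ge N}|f_j|^p w\to 0$ (dominated convergence) and $\int |f_j(\cdot+t)-f_j(\cdot)|^p w\to 0$ as $|t|\to 0$. The latter uses density of compactly supported continuous functions in $L^p(w)$, which holds precisely because $w$ and $w^{-1/(p_0-1)}$ are both locally integrable weights; continuity of translation for such test functions against the (possibly singular) weight requires a short argument, splitting the translation estimate over a large ball (where one uses uniform continuity plus $w\in L^1_{\mathrm{loc}}$) and its complement (where one uses tightness, condition (ii) already being known for the finite family).

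For the \emph{sufficiency} direction, which is the heart of the matter, I would proceed as follows. Fix a mollifier $\varphi_\delta$ and, for $N>0$, set $T_{N,\delta}f = (f\mathbf 1_{|x|\le N})*\varphi_\delta$. Using the local $L^1$ bound above together with (iii), show that $\sup_{f\in F}\|T_{N,\delta}f - f\|_{L^p(w)}\to 0$ as $\delta\to 0$ and $N\to\infty$ — here condition (ii) controls the tail, (iii) controls the mollification error after writing $T_{N,\delta}f - f\mathbf 1_{|x|\le N}$ as an average of translates and using Minkowski/the $p<1$ quasi-triangle inequality, and the weight $w$ only enters through the already-established uniform local integrability. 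Then, for fixed $N$ and $\delta$, the family $\{T_{N,\delta}f : f\in F\}$ is uniformly bounded and equicontinuous on the ball $\{|x|\le N+1\}$ (bounds on the functions and their gradients come from $\|\varphi_\delta\|_\infty$, $\|\nabla\varphi_\delta\|_\infty$, and the uniform local $L^1$ bound), hence precompact in $C(\overline{B(0,N+1)})$ by Arzelà–Ascoli, and therefore precompact in $L^p(w)$ on that ball since $w\in L^1_{\mathrm{loc}}$. A diagonal argument over a sequence $N_k\to\infty$, $\delta_k\to 0$, combined with the uniform approximation, then produces from any sequence in $F$ a subsequence that is Cauchy in $L^p(w)$; completeness of $L^p(w)$ (valid for all $0<p<\infty$, it being a complete quasi-normed space) finishes the proof.

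The main obstacle — and the only place the weighted setting genuinely differs from Tsuji's unweighted theorem — is controlling the mollification error $\|T_{N,\delta}f-f\|_{L^p(w)}$ uniformly in $f$ when $p<1$ and $w$ may be singular: one cannot simply invoke Young's inequality in $L^p(w)$. The fix is to keep everything at the level of translates, $T_{N,\delta}f - f\mathbf 1_{\le N} = \int \varphi_\delta(y)\big(f(\cdot-y)\mathbf 1_{\le N}(\cdot-y) - f\mathbf 1_{\le N}\big)\,dy$, apply the quasi-triangle inequality $\|\sum g_j\|_{L^p(w)}^p \le \sum\|g_j\|_{L^p(w)}^p$ (or its integral form via Minkowski after raising to the power $p$), and bound each piece by condition (iii) plus a tail term handled by (ii); the uniform local $L^1$ bound is what guarantees these manipulations are legitimate (no infinite quantities appear). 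Once this uniform approximation is in hand, the rest is the classical Arzelà–Ascoli/diagonal machinery and is routine.
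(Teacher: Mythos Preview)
Your approach is essentially the paper's argument for the range $p\ge p_0$ (ball averages, Minkowski, Arzel\`a--Ascoli), but it breaks down for $0<p<1$, and this is precisely the case the theorem is meant to cover. There are two concrete failures. First, the local $L^1$ bound you assert is false: take $w\equiv 1$ (so $w^{-1/(p_0-1)}\equiv 1$ is certainly a weight) and any $p<1$; then $f(x)=|x|^{-\alpha}\chi_{B(0,1)}$ with $n\le \alpha<n/p$ lies in $L^p$, satisfies (i)--(iii) as a singleton, yet $f\notin L^1_{\mathrm{loc}}$, so the convolution $T_{N,\delta}f$ is not even defined. Your ``interpolation/H\"older step'' cannot repair this. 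Second, the integral Minkowski inequality you invoke to control the mollification error is reversed for $p<1$: for nonnegative $g$ one has $\bigl\|\int g(\cdot,y)\,d\mu(y)\bigr\|_{L^p}\ge \int\|g(\cdot,y)\|_{L^p}\,d\mu(y)$. The finite-sum quasi-triangle inequality $\|\sum g_j\|_{L^p}^p\le\sum\|g_j\|_{L^p}^p$ has no useful integral analogue, since discretizing introduces weights $(\Delta y)^p$ with $p<1$ that blow up in the limit.

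The paper circumvents both obstacles by Tsuji's power trick: for $p<p_0$ set $a=p/p_0\in(0,1)$ and pass to $F^a=\{|f|^a:f\in F\}$. The elementary inequality $|s^a-t^a|\le|s-t|^a$ shows $F^a$ inherits (i)--(iii) in $L^{p_0}(w)$, where $p_0>1$ so the $L^1_{\mathrm{loc}}$ embedding and Minkowski are available, and your mollification/Arzel\`a--Ascoli argument (the paper's Lemma~4.1) gives relative compactness of $F^a$ there. Cauchy sequences are then pulled back to $F\subset L^p(w)$ via the companion inequality $|s-t|^a\le a^{-1}\bigl((s+t)/|s-t|\bigr)^{1-a}|s^a-t^a|$, splitting according to whether $(f_i+f_j)/|f_i-f_j|$ is large or small and using the uniform bound (i) on the ``large'' set. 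The nonlinear map $f\mapsto|f|^a$ is the missing idea; a direct mollification argument in $L^p(w)$ for $p<1$ does not seem to be available.
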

With Theorem \ref{thm-comp-lpomega} in hand, the second main aim of this paper is to apply Theorem \ref{thm-comp-lpomega} in the study of compacetness of the commutators of Calder\'on-Zygmund type operators. We begin by recalling some known results.
\medskip

It was well known that the boundedness of the linear commutators of Calder\'on-Zygmund operators was given by Coifman, Rochberg and Weiss \cite{CRW} when the symbol is in $BMO(\R^n)$.
Later on, commutators of some classical operators, such as the Calder\'{o}n-Zygmund operators \cite{Uc}, the  Fourier multipliers \cite{CO}  and certain Littlewood-Paley square functions \cite{ChD}, have been shown that they are not only bounded on $L^p$ spaces but also compact when they are multiplied with functions in $CMO(\R^n)$.
\medskip

The bilinear commutators of the Calder\'on-Zygmund operator $T$ were first studied by P\'erez and Torres \cite{PT}, which are defined by
\begin{align}
[T, b]_1(f, g)(x)&= (T(bf,g)-bT(f,g))(x)\\
[T, b]_2 (f, g)(x)&=(T(f,bg)-bT(f,g))(x).
\end{align}
Related topics were later further studied by Tang \cite{tang}, Lerner et al \cite{LOPTT} and Xue \cite{Xue}.
The concept of compactness for a bilinear operator was first introduced by B\'{e}nyi and Torres \cite{BeT}. They proved that the commutators of bilinear Calder\'{o}n-Zygmund operator were also compact from $L^{p_1}\times L^{p_2}$ to $L^p$ if $\frac1{p_1}+\frac1{p_2}=\frac1p$, $1<p_1,p_2<\infty$ and $p\geq 1$.
Later on, this compactness property has been extended to the following operators: the maximal bilinear Calder\'{o}n-Zygmund operators \cite{DiMX}, the bilinear Fourier multiplier operators \cite{Hu} and the bilinear pseudodifferential operators \cite{BeO}. The working spaces have also been extended to weighted Lebesgue spaces \cite{BeDMT} and Morry spaces \cite{DiM}. 
\medskip

All the previously mentioned results of compactness need to assume that $p\geq 1$.
However, the boundedness of the commutators of the above operators shows that $p\geq1$ is unnecessary for the boundedness to be hold. This restriction comes from the application of Fr\'{e}chet-Kolmogorov theorem \cite{Yo}, as well as its weighted analogue \cite{ClC}.
However, Tsuji \cite{Ts} had already showed that the unweighted Fr\'{e}chet-Kolmogorov theorem can be extended to $p>0$.
Recently,  the region of $p$ was extended to the case $1/2<p\leq 1$ by  Torres, Xue, and Yan \cite{TXY}. It was shown that if $b\in CMO$, $1<p_1,p_2<\infty$ and $1/2< p < \infty$ with $1/p_1+1/p_2=1/p$, then the commutator defined in (1.1) or (1.2), $[T, b]_j: L^{p_1}\times L^{p_2}\rightarrow L^p$ is a compact bilinear operator.  Still more recently, Chaffee et al \cite{CCHTW} further proved that the commutators of certain kinds of homogeneous bilinear Calder\'on-Zygmund operators
enjoy the compactness property if and only if $b\in CMO$ and $p>1$.
\medskip

We will consider the compactness of the following generalized commutator of multilinear Calder\'{o}n-Zygmund operator.
\begin{dfn}\label{df1.2} (\cite{XuY})
Let $T$ be an $m$-linear Calder\'{o}n-Zygmund operator and $S$ be a finite subset of $Z^+\times \{1,\dots, m\}$.  For any
$f_j\in \mathscr{S}$, $j=1,\dots, m$,  the generalized commutator $T_{\vec b,S}$ of $T$ is defined by
\begin{equation*}\aligned
T_{\vec{b},S} (\vec{f})(x)=\int_{(\mathbb{R}^n)^m}\prod_{(i,j)\in
S} (b_i(x)-b_i(y_j))K(x,y_{1},\dots,y_{m})
\prod_{j=1}^mf_j(y_j)dy_1,\dots,dy_m.
\endaligned
\end{equation*}
\end{dfn}
$T_{\vec{b},S}$ is called to be the generalized commutator of $T$ due to its capacity and flexibility with respect to $S$. For example by choosing $S=\{(i,i): i\in\{1,\dots, m\}\}$, or $S=\{(j, j): j\in\{1,\dots, m\}\}$, then $T_{\vec{b},S}$ coincides with $T_{\vec{b}} (\vec{f})$ \cite{LOPTT} and $T_{\prod b}$ \cite{PPTT}, respectively. Other choice of selection may lead to new type of commutators of $T.$ 
\medskip

Instead of considering the compactness for the commutators of operator $T$, we will try to establish directly a compactness theory for the commutators of vector-valued multilinear operators. This is mainly because multilinear Littlewood-Paley operators, such as multilinear $g$-function, 
Marcinkiewicz integral and $g^*_{\lambda}$-function (see Section 3 for the definitions) can be regarded as multilinear vector-valued Calder\'on-Zygmund operators \cite{XuY2}, therefore, the extension of the compactness result to vector-valued operators does make sense  and it will enable us to study the generalized commutators of these multilinear square operators similarly defined as in Definition \ref{df1.2} and will enable us to show that the commutators of them are all compact. For more recent works related to the compactness of other operators,such like the bilinear
Fourier multipliers and the bilinear pseudodifferential operators, we refere the readers to \cite{TX19} and \cite{TXYY}.
\medskip

We begin with some definitions.
Let $0<r<\infty$. For any quasi-Banach space, denote $B_{r,X}=\{x\in X:\|x\|_X\leq r\}$.
Then there is a natural extension of the corresponding definition in \cite{BeT} ($m=2$).
\medskip

\begin{dfn}\label{dy1-ch7}
Let $X_j (j=1,\dots, m),Y$ be quasi-Banach spaces. A multilinear operator $T:X_1\times\dots\times X_m \rightarrow Y$ is called a compact operator, if $T(B_{1,X_1}\times\dots\times B_{1,X_m})$ is relatively compact in $Y$.
\end{dfn}

\begin{rem}
In the above definition, it is equivalent to require
$T(B_{1,X_1}\times\dots\times B_{1,X_m})$ to be sequentially compact in $Y$, as $Y$ is quasi-Banach space. 
\end{rem}

Let $B(X_1\times\dots\times X_m,Y)$ be the set of all bounded multilinear operators from $X_1\times\dots\times X_m$ to $Y$ and let
$K(X_1\times\dots\times X_m,Y)$ be the set of all compact operators from $X_1\times\dots\times X_m$ to $Y$. Then, we have 
$K(X_1\times\dots\times X_m,Y)$ is closed in $B(X_1\times\dots\times X_m,Y)$.
To see this, when $Y$ is a Banach space and $m=2$, this property is shown in \cite{BeT}. As a matter of fact, it is sufficient to assume that $Y$ is quasi-Banach. The extension from $m=2$ to general $m$ needs only simple modification in the  proof given in \cite{BeT}.

\begin{dfn}
Let $1< p_j<\infty$,$j=1,\dots, m$, $\frac{1}{p}=\sum_{j=1}^m\frac{1}{p_j}$, $\vec\omega=(\omega_1,\dots,\omega_m)$, $A$ is a nonempty subset of $\{1,\dots, m\}$.
Denote $\frac1{p_A}=\sum_{j\in A}\frac{1}{p_j}$, $\nu_{\vec\omega,A}=\prod_{j\in A}\omega^{{p_A}/{p_j}}_j$.
We say that $\vec{\omega}$ satisfies the $A_{\vec{p},A}$ condition, or $\vec\omega_A\in A_{\vec p,A}$, if
\begin{equation*}\sup_Q{\bigg(\frac{1}{|Q|}\int_Q\nu_{\vec{\omega},A}\bigg)}^{\frac{1}{p_A}}
\prod_{j\in A}{\bigg(\frac{1}{|Q|}\int_Q\omega_j^{1-p_j'}\bigg)}^{\frac{1}{p_j'}} <\infty.
\end{equation*}
\end{dfn}

\begin{rem}
If $\omega_i\in A_{p_i}$, $i\in\{1,\dots, m\}$, then for any nonempty subset $A$, it follows that $\nu_{\vec\omega,A}\in A_{p_A|A|}$ and $\vec\omega_A\in A_{\vec p,A}$.
\end{rem}
We denote by $BMO(\R^n)$ the John-Nirenberg space of function of bounded mean oscillation endowed with its usual norm. The space of $C^\infty$ functions with compact support is denoted by $C^\infty_c$, we define
$$
CMO = \overline{{C^\infty_c}}^{BMO},
$$
the closure of $C^\infty_c$ in the $BMO$ norm.

Our second main result is as follows:.

\begin{thm}\label{thmmain} Let $1< p_j<\infty$, 
$\frac{1}{p}=\sum_{j=1}^m\frac{1}{p_j}$, $S$ be a finite subset of 
$\mathbb Z^+\times \{1,\dots, m\}$, $S_j=\{i:(i,j)\in S\}$ for $1\le j\le m$. 
and $A=\{j\in \{1,2,\dots,m\}: (i,j)\in S\ \text{for some }i\in \mathbb Z^+\}$.
 Let $B$ be a Banach space, $T$ be a $B$-valued
$m$-linear Calder\'{o}n-Zygmund operator, and for any $i\in \bigcup_{j=1}^mS_j$, $b_i\in CMO$. If
$\vec{\omega}\in A_{\vec{p}}$, $\vec\omega_{A^c}\in A_{\vec p,A^c}$, and $\nu_{\vec\omega,A}\in A_{p_A|A|}$, then $T_{\vec b,S}$ is a compact operator from $L^{p_1} (\omega_1)\times\dots\times L^{p_m} (\omega_m)$ to $L^p_B(\nu_{\vec\omega})$.
\end{thm}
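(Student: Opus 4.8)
The plan is to verify the three conditions of the weighted Fr\'{e}chet--Kolmogorov theorem (Theorem \ref{thm-comp-lpomega}) for the image set $F = T_{\vec b,S}\big(B_{1,L^{p_1}(\omega_1)}\times\dots\times B_{1,L^{p_m}(\omega_m)}\big)$ inside $L^p_B(\nu_{\vec\omega})$, after first checking that the weight $\nu_{\vec\omega}$ admits a dual exponent $p_0>1$ with $\nu_{\vec\omega}^{-1/(p_0-1)}$ still a weight. Since $\vec\omega\in A_{\vec p}$ one has $\nu_{\vec\omega}\in A_{mp}\subset A_\infty$, so this structural hypothesis is automatic. Condition (i) is exactly the known weighted boundedness of the generalized commutator $T_{\vec b,S}$ under the hypotheses $\vec\omega\in A_{\vec p}$, $b_i\in CMO\subset BMO$ (the Banach-valued, multilinear analogue of the estimates for $T_{\vec b}$ and $T_{\prod b}$, which follows from \cite{XuY,XuY2} and the $A_{\vec p}$ extrapolation machinery); this gives a uniform bound on $\|T_{\vec b,S}(\vec f)\|_{L^p_B(\nu_{\vec\omega})}$ over the unit product ball.

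For conditions (ii) and (iii) the standard device is to reduce to the case of nice symbols and compactly supported inputs. I would first approximate each $b_i\in CMO$ by $b_i^\delta\in C_c^\infty$ with $\|b_i-b_i^\delta\|_{BMO}<\delta$; by the multilinearity and the boundedness in (i), the operator norm of $T_{\vec b,S}-T_{\vec b^\delta,S}$ from the product space into $L^p_B(\nu_{\vec\omega})$ is $O(\delta)$, so it suffices to prove (ii) and (iii) for $T_{\vec b^\delta,S}$ with all $b_i\in C_c^\infty$. For such smooth symbols, the kernel of $T_{\vec b^\delta,S}$ inherits, for fixed $\vec f$ with compact support, decay and smoothness from the Calder\'{o}n--Zygmund kernel $K$ together with the compact support and Lipschitz bounds of the $b_i^\delta$: this yields the decay needed for (ii) (the value $T_{\vec b^\delta,S}(\vec f)(x)$ decays like a negative power of $|x|$ away from the supports, uniformly in $\|f_j\|\le1$, once we also truncate inputs to a large ball — the tails of $f_j$ outside the ball are controlled again by (i)), and the smoothness/Hörmander-type estimate needed for (iii) (translation in $x$ moves $x$ in both the $b_i(x)$ factors and in $K(x,\vec y)$, and the $\eps$-regularity of $K$ plus the Lipschitz bound on $b_i^\delta$ give an $L^p(\nu_{\vec\omega})$-modulus of continuity that vanishes as $|t|\to0$). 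The Banach-space-valued nature of $T$ is harmless here: all estimates are run with $\|\cdot\|_B$ inside the integrals exactly as in the scalar case, and the weighted norm inequalities are applied to the scalar function $x\mapsto\|T_{\vec b,S}(\vec f)(x)\|_B$.

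The technical heart — and the place where the extra weight hypotheses $\vec\omega_{A^c}\in A_{\vec p,A^c}$ and $\nu_{\vec\omega,A}\in A_{p_A|A|}$ enter — is making the truncation-and-tail estimates quantitative in the weighted norm when $p$ is allowed to be small (in particular $p<1$), since then one cannot use the triangle inequality in $L^p(\nu_{\vec\omega})$ and must instead exploit the new weighted Fr\'{e}chet--Kolmogorov theorem together with weighted estimates for the "partial" commutators obtained by freezing the variables indexed by $A$ and letting those in $A^c$ play the role of a genuine multilinear Calder\'{o}n--Zygmund operator acting on the remaining inputs. Concretely, one writes the $(i,j)\in S$ factors with $i\in A^c$ as honest commutator factors and absorbs the $i\in A$ factors into the inputs via $A_{p_A|A|}$-weighted estimates; the condition $\vec\omega_{A^c}\in A_{\vec p,A^c}$ is precisely what is needed to run the weighted bounds for the sub-multilinear operator in the $A^c$ variables, and $\nu_{\vec\omega,A}\in A_{p_A|A|}$ controls the absorbed block. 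I expect the main obstacle to be exactly this bookkeeping: organizing the decomposition over the subset $A$, verifying that all the intermediate weighted norm inequalities hold with the stated classes, and pushing the smooth-symbol/compact-support reductions through the quasi-Banach regime $0<p<1$ using Theorem \ref{thm-comp-lpomega} in place of the classical Fr\'{e}chet--Kolmogorov criterion. Once the decay estimate (ii) and equicontinuity estimate (iii) are established uniformly over the unit ball for smooth symbols, the $\delta$-approximation closes the argument and Theorem \ref{thm-comp-lpomega} delivers the relative compactness of $F$, i.e. the compactness of $T_{\vec b,S}$.
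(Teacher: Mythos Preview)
Your overall architecture (CMO approximation to $C_c^\infty$ symbols, then verify the three Fr\'echet--Kolmogorov conditions via Theorem \ref{thm-comp-lpomega}) matches the paper's, but two of your key steps are not right as stated.

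First, your mechanism for condition (ii) is flawed, and your reading of the set $A$ is backward. Truncating the inputs $f_j$ to a large ball does not help: the tail norms $\|f_j\chi_{\{|y|>R\}}\|_{L^{p_j}(\omega_j)}$ are \emph{not} uniformly small over the unit ball, so ``the tails of $f_j$ outside the ball are controlled again by (i)'' yields no decay. What actually happens is that $A$ is a subset of $\{1,\dots,m\}$ indexing the $y_j$--variables (one may take $A=\{j:S_j\neq\emptyset\}$), not the $b_i$--symbols. For $|x|$ outside $\bigcup_i\supp b_i$ one has $b_i(x)-b_i(y_j)=-b_i(y_j)$, which forces $y_j\in\supp b_i$ for every $j\in A$; this localizes those variables and, via Lemma \ref{yl3-ch7}, gives the pointwise bound $\|T_{\vec b,S,\delta}(\vec f)(x)\|_B\lesssim |x|^{-n|A|}\prod_{j\in A}\|f_j\|_{L^1}\,\mathcal{M}_{A^c}(\vec f)(x)$. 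The hypothesis $\vec\omega_{A^c}\in A_{\vec p,A^c}$ is exactly what bounds $\|\mathcal{M}_{A^c}(\vec f)\|_{L^{p_{A^c}}(\nu_{\vec\omega,A^c})}$, and $\nu_{\vec\omega,A}\in A_{p_A|A|}$ is what makes $\int_{|x|>N}|x|^{-n|A|p_A}\nu_{\vec\omega,A}\,dx\to 0$. So the extra weight conditions live entirely in the decay estimate (ii); they have nothing to do with ``absorbing the $i\in A$ factors into the inputs'' or with the quasi-Banach issue.

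Second, for condition (iii) you cannot simply invoke the $\varepsilon$-regularity of $K$: the smoothness estimate $\|K(x+t,\vec y)-K(x,\vec y)\|_B\lesssim |t|^\varepsilon(\sum_j|x-y_j|)^{-mn-\varepsilon}$ is only valid when $|t|\le\tfrac12\sum_j|x-y_j|$, and near the diagonal it fails. The paper deals with this by introducing a smoothly truncated operator $T_{\vec b,S,\delta}$ whose kernel is supported in $\{\sum_j|x-y_j|\gtrsim\delta\}$, proving (Lemma \ref{5.3}) that $T_{\vec b,S,\delta}\to T_{\vec b,S}$ in operator norm (here the Lipschitz bound on one $b_i$ gains a power near the diagonal, and Lemma \ref{yl2-ch7} closes the estimate), and then verifying (i)--(iii) for $T_{\vec b,S,\delta}$, where for $|t|\le\delta/2$ the kernel smoothness applies everywhere. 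Your outline is missing this truncation-and-limit step; without it the translation estimate in (iii) has a gap at the diagonal.
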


\begin{rem}
When $A=\{1,\dots, m\}$, $\nu_{\vec\omega,A}=\nu_{\vec\omega}$.
Then, in Theorem \ref{thmmain}, it is enough to assume that $\vec\omega\in A_{\vec p}$.
\end{rem}
Theorem \ref{thmmain} implies not only the compactness of generalized commutators of scalar-valued multilinear Calder\'{o}n-Zygmund operator, but also the compactness of generalized commutators of multilinear square operators, we surmmarize these results in two corollaries.
\begin{cor}\label{tm2}
Let $S$ and $S_j$ for $1\le j\le m$ be as in Theorem \ref{thmmain}. 
Let $1< p_j<\infty$, $j=1,\dots, m$, $\frac{1}{p}=\sum_{j=1}^m\frac{1}{p_j}$, 
$\vec\omega\in A_{\vec p}$, and for any $i\in \bigcup_{j=1}^mS_j$, 
$b_i\in CMO$. Then $T_{\vec b,S}$ is compact from 
$L^{p_1} (\omega_1)\times\dots\times L^{p_m} (\omega_m)$ to 
$L^p(\nu_{\vec\omega})$.
\end{cor}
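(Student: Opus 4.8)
The plan is to read Corollary \ref{tm2} off Theorem \ref{thmmain} by specializing to $A=\{1,\dots,m\}$, first for the scalar multilinear Calder\'{o}n-Zygmund operator and then, through the vector-valued reformulation of \cite{XuY2}, for the multilinear Littlewood-Paley operators. With $A=\{1,\dots,m\}$ one has $A^c=\emptyset$ and $\nu_{\vec\omega,A}=\nu_{\vec\omega}$, so the condition $\vec\omega_{A^c}\in A_{\vec p,A^c}$ is vacuous; the only remaining hypothesis to check is $\nu_{\vec\omega,A}=\nu_{\vec\omega}\in A_{p_A|A|}=A_{mp}$, which is the classical fact that every $\vec\omega\in A_{\vec p}$ has $\nu_{\vec\omega}=\prod_{j=1}^m\omega_j^{p/p_j}\in A_{mp}$ (see \cite{LOPTT}). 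Hence, under the single hypothesis $\vec\omega\in A_{\vec p}$, all the assumptions of Theorem \ref{thmmain} hold with $B=\C$, and $T_{\vec b,S}$ is compact from $L^{p_1}(\omega_1)\times\dots\times L^{p_m}(\omega_m)$ to $L^p(\nu_{\vec\omega})$; this is the assertion of the corollary for the scalar operator $T$.

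For the multilinear square operators, I would invoke \cite{XuY2}: each of the multilinear $g$-function, the multilinear Marcinkiewicz integral, and the multilinear $g^*_\lambda$-function (with $\lambda$ in the admissible range) can be written in the form $x\mapsto\|T(\vec f)(x)\|_B$, where $B$ is an $L^2$ space, with the appropriate measure, over the relevant parameter space and $T$ is a $B$-valued $m$-linear Calder\'{o}n-Zygmund operator whose $B$-valued kernel satisfies the usual size and smoothness estimates. Because the scalar factor $\prod_{(i,j)\in S}(b_i(x)-b_i(y_j))$ does not depend on the parameter over which the $B$-norm is taken, the corresponding generalized commutator of the square operator is exactly $x\mapsto\|T_{\vec b,S}(\vec f)(x)\|_B$, where $T_{\vec b,S}$ is the $B$-valued generalized commutator of $T$ in the sense of Definition \ref{df1.2}. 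Applying Theorem \ref{thmmain} with $A=\{1,\dots,m\}$ as above, $T_{\vec b,S}$ maps $B_{1,L^{p_1}(\omega_1)}\times\dots\times B_{1,L^{p_m}(\omega_m)}$ to a relatively compact subset of $L^p_B(\nu_{\vec\omega})$. Since the norm map $g\mapsto\|g(\cdot)\|_B$ is norm-preserving, hence continuous, from $L^p_B(\nu_{\vec\omega})$ into $L^p(\nu_{\vec\omega})$, it carries this relatively compact set to a relatively compact subset of $L^p(\nu_{\vec\omega})$; thus the generalized commutators of these multilinear square operators are likewise compact from $L^{p_1}(\omega_1)\times\dots\times L^{p_m}(\omega_m)$ into $L^p(\nu_{\vec\omega})$.

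The index bookkeeping and the transfer of compactness through the norm map are routine. The only step carrying real content --- and the one I expect to be the main obstacle --- is the \emph{vector-valued reformulation}: one must verify that the Littlewood-Paley kernels, read as $B$-valued kernels, genuinely satisfy the multilinear Calder\'{o}n-Zygmund size and regularity conditions, and that the resulting $B$-valued operator is bounded on the required range of exponents (in particular the restriction on $\lambda$ for the $g^*_\lambda$-function). This is exactly what \cite{XuY2} provides, so I would cite it rather than reprove it; granting it, Corollary \ref{tm2} is an immediate specialization of Theorem \ref{thmmain}.
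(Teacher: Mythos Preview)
Your first paragraph is correct and is exactly the paper's own route: specialize Theorem~\ref{thmmain} with $B=\mathbb{C}$ and $A=\{1,\dots,m\}$, so that $A^c=\emptyset$, $\nu_{\vec\omega,A}=\nu_{\vec\omega}$, and the remaining hypothesis $\nu_{\vec\omega}\in A_{mp}$ follows from $\vec\omega\in A_{\vec p}$ by \cite{LOPTT} (this is precisely the content of the remark following Theorem~\ref{thmmain}). Your second and third paragraphs, while correct, address the multilinear square operators and therefore pertain to the \emph{subsequent} corollary rather than to Corollary~\ref{tm2}, which concerns only the scalar-valued multilinear Calder\'on--Zygmund operator $T$.
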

\begin{cor}
Let $S$ and $S_j$ for $1\le j\le m$ be as in Theorem \ref{thmmain}. 
Let $\mathcal{T}$ be any one of the following three operators: multilinear Littlewood-Paley $g$-function, multilinear
Marcinkiewicz integral, and multilinear $g^*_{\lambda}$-function (see Section 3 for the definitions).  Assume $1< p_{i}<
\infty$, and
$\frac{1}{p}=\frac{1}{p_{1}}+\cdots+\frac{1}{p_{m}}$, and for any $i\in \bigcup_{j=1}^mS_j$, $b_i\in CMO$. If
$\vec{\omega}\in A_{\vec{p}}$, then $\mathcal{T}_{\vec b,S}$ is a compact operator from $L^{p_1} (\omega_1)\times\dots\times L^{p_m} (\omega_m)$ to $L^p(\nu_{\vec\omega})$
\end{cor}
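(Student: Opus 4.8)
The plan is to deduce this corollary from Theorem \ref{thmmain} by recognizing each of the three square operators $\mathcal{T}$ as a $B$-valued $m$-linear Calder\'on--Zygmund operator for a suitably chosen Banach space $B$, as was done in \cite{XuY2}. Concretely, for the multilinear $g$-function one takes $B=L^2((0,\infty),dt/t)$, for the Marcinkiewicz integral a similar $L^2$-space in the dilation parameter, and for the $g^*_\lambda$-function the space $L^2$ of the upper half-space $\R^{n+1}_+$ with the weight $\bigl(\tfrac{t}{t+|x-y|}\bigr)^{n\lambda}\,\tfrac{dy\,dt}{t^{n+1}}$ built into the measure; here one needs $\lambda$ large enough relative to $n$ and $m$, as usual. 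First I would check that the associated $B$-valued kernel $K(x,y_1,\dots,y_m)$, whose value at $(x,\vec y)$ is the function of the extra parameter(s) given by the defining kernel of $\mathcal{T}$, obeys the size bound $\|K(x,\vec y)\|_B\lesssim\bigl(\sum_j|x-y_j|\bigr)^{-mn}$ together with the Calder\'on--Zygmund smoothness (Hörmander-type) estimates in each variable, with $B$-norms replacing absolute values, and that $\mathcal{T}$ is bounded as a $B$-valued $m$-linear operator on the relevant product of unweighted Lebesgue spaces. This identifies the $B$-valued operator $T$ attached to $\mathcal{T}$ as an $m$-linear Calder\'on--Zygmund operator in the sense required by Theorem \ref{thmmain}.

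Next I would record the elementary but crucial intertwining identity obtained by taking pointwise $B$-norms: $\|T(\vec f)(x)\|_B=\mathcal{T}(\vec f)(x)$, and, since the commutator symbol $\prod_{(i,j)\in S}(b_i(x)-b_i(y_j))$ is a scalar that passes through the $B$-norm, $\|T_{\vec b,S}(\vec f)(x)\|_B=\mathcal{T}_{\vec b,S}(\vec f)(x)$ for a.e.\ $x$, where $T_{\vec b,S}$ is the generalized commutator of the $B$-valued operator $T$ as in Definition \ref{df1.2}. Thus $\mathcal{T}_{\vec b,S}=\Phi\circ T_{\vec b,S}$, where $\Phi\colon L^p_B(\nu_{\vec\omega})\to L^p(\nu_{\vec\omega})$ is the map $g\mapsto\|g(\cdot)\|_B$. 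The map $\Phi$ is continuous ($1$-Lipschitz when $p\ge1$, and satisfying $\|\Phi(g)-\Phi(h)\|_{L^p(\nu_{\vec\omega})}^p\le\|g-h\|_{L^p_B(\nu_{\vec\omega})}^p$ when $0<p<1$ by the reverse triangle inequality and the elementary inequality $|a-b|^p\le|a-b^{\,}|^p$), and therefore carries relatively compact subsets of $L^p_B(\nu_{\vec\omega})$ to relatively compact subsets of $L^p(\nu_{\vec\omega})$.

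Then I would apply Theorem \ref{thmmain} with $A=\{1,\dots,m\}$, so that $A^c=\emptyset$ and $\nu_{\vec\omega,A}=\nu_{\vec\omega}$; by the Remark following Theorem \ref{thmmain} the only weight hypothesis needed in this case is $\vec\omega\in A_{\vec p}$, which is exactly what is assumed here, while $b_i\in CMO$ for $i\in\bigcup_{j=1}^mS_j$ is also assumed. Theorem \ref{thmmain} then yields that $T_{\vec b,S}$ is compact from $L^{p_1}(\omega_1)\times\dots\times L^{p_m}(\omega_m)$ to $L^p_B(\nu_{\vec\omega})$; composing with $\Phi$ and using that $\Phi$ preserves relative compactness shows that $\mathcal{T}_{\vec b,S}$ is compact from $L^{p_1}(\omega_1)\times\dots\times L^{p_m}(\omega_m)$ to $L^p(\nu_{\vec\omega})$, which is the assertion of the corollary.

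The main obstacle I expect is the first step: verifying carefully that each of the three square functions -- and the $g^*_\lambda$-function in particular, where the choice of $\lambda$ and the structure of the tent-type space $B$ enter -- genuinely satisfies the $B$-valued Calder\'on--Zygmund kernel estimates and the a priori multilinear boundedness required to invoke Theorem \ref{thmmain}. This is precisely the content imported from \cite{XuY2}. Once that identification is in place, the intertwining identity and the application of Theorem \ref{thmmain} with $A$ the full index set are routine.
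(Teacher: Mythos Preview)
Your proposal is correct and follows essentially the same route as the paper. The paper identifies the three square operators as $H_1$- or $H_2$-valued multilinear Calder\'on--Zygmund operators via Lemmas \ref{lem-XuY2-1} and \ref{lem-XuY2-2} (imported from \cite{XuY2}) and then invokes Theorem \ref{thmmain} with $A=\{1,\dots,m\}$, exactly as you do; your explicit use of the norm map $\Phi\colon g\mapsto\|g(\cdot)\|_B$ to pass from compactness in $L^p_B(\nu_{\vec\omega})$ to compactness in $L^p(\nu_{\vec\omega})$ simply makes rigorous a step the paper leaves implicit.
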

This paper will be organized as follows. We will recall some definitions and known results about multilinear Calder\'{o}n-Zygmund operators, multiple weights, generalized commutators and multilinear square operators in Section 2. Section 3 is devoted to present the theory of the vector-valued multilinear Calder\'{o}n-Zygmund operators, including their generalized commutators. The proofs of Theorem \ref{thm-comp-lpomega}  and Theorem \ref{thmmain} will be given in Section 4 and 5, respectively.


\section{Preliminaries}
\subsection \noindent\textbf{Multilinear C-Z operator and multiple weights}

For $f_j\in \mathcal{S}(\mathbb{R}^{n})$, $1\leq j\leq m$, and
$x\not\in\bigcap_{j=1}^{m}\operatorname{supp}f_{j}$, the multilinear operator $T$ is defined by
\begin{equation*}\label{1}
T(f_{1},\cdots,f_{m})(x)
=\int_{(\mathbb{R}^{n})^{m}}K(x,y_{1},\dots,y_{m})f_{1}(y_{1})\cdots
f_{m}(y_{m})dy_{1}\dots dy_{m},
\end{equation*}where  the kernel $K(x,y_1,\dots,y_m)$ is a locally integrable function defined away from the diagonal
$x=y_1=\dots=y_m$ in $(\mathbb{R}^n)^{m+1}$,
satisfying, for some $\varepsilon,A_{\varepsilon}>0$,
\begin{enumerate}
\item[\emph{(i)}]$
|K(x,y_1,\dots,y_m)|\leq\frac{C}{(\sum\limits_{j=1}^m|x-y_j|)^{mn}};$
\item[\emph{(ii)}]
$ |K(x,y_1,\dots,y_i,\dots,y_m)-K(x,y_1,\dots,y_i',\dots,y_m)|
\leq\frac{C|y_i-y_i'|^{\varepsilon}}{(\sum^m_{j=1}|x-y_j|)^{mn+\varepsilon}}$\\
whenever $|y_i-y_i'|\leq\frac{1}{2}\max_{1\leq j \leq m}|x-y_j|$;
\item[\emph{(iii)}]
$|K(x,y_1,\dots,y_m)-K(x',y_1,\dots,y_m)|
\leq\frac{C|x-x'|^{\varepsilon}}{(\sum^m_{j=1}|x-y_j|)^{mn+\varepsilon}}$\\
whenever
$|x-x'|\leq\frac{1}{2}\sum_{j=1}^m|x-y_j|$.
\end{enumerate}

An $m-$linear (or quasi-linear) operator is called \emph{bounded at some point} if it is bounded from $L^{p_1}\times\dots\times L^{p_m}$ to $L^{p}$
for some $1\leq p_i\leq\infty$,
$\frac1p=\sum_{j=1}^m\frac1{p_m}$ and $p<\infty$.

Following the work of Grafakos and Torres \cite{GrT}, we call the above $T$ is a mutilinear Calder\'{o}n-Zygmund operator if it is bounded at some point. Grafakos and Torres \cite{GrT} showed that 
a mutilinear Calder\'{o}n-Zygmund operator is always bounded from
$L^{1}\times\dots\times L^{1}$ to $L^{\frac1m,\infty}$.
After introducing the multiple weights associated with the so-called new maximal functions, the authors in \cite{LOPTT} established a nice weighted theory for $T$. We first recall the definition of  $A_{\vec{p}}$ weight class. 
 \begin{dfn} \cite{LOPTT}. Let $1\leq p_1,\cdots,p_m<\infty,$ $p$ satisfies
$\frac{1}{p}=\frac{1}{p_1}+\cdots+\frac{1}{p_m}.$ For any $i=1,\cdots,m$, let $\omega_i$ be a weight, which is nonnegative and locally integrable function.  Given
$\vec{\omega}=(\omega_1,\cdots, \omega_m)$, set
$\nu_{\vec{\omega}}=\prod_{i=1}^m\omega_i^{p/p_i}$.
We say that $\vec{\omega}$ satisfies the $A_{\vec{p}}$ condition if
$$\sup_B \left(\frac{1}{|B|}\int_B \prod_{i=1}^m\omega_i^{p/p_i}\right)^{\frac{1}{p}}\prod_{i=1}^m\left( \frac{1}{|B|}\int_B \omega_i^{1-p_i^{'}}\right)^{\frac{1}{p_i^{'}}}< \infty,$$
when $p_i=1,$ $\left( \frac{1}{|B|}\int_B
\omega_i^{1-p_i^{'}}\right)^{{1}/{p_i^{'}}}$ is understood as
$(\inf\limits_B \omega_i)^{-1}.$
\end{dfn}
Given a nonempty set $A\subset\{1,\dots, m\}$, define
\begin{align*}
\mathcal{M}_A(\vec f)(x)=\sup_{Q\ni x}\prod_{j\in A}\frac{1}{|Q|}\int_{Q}|f_j(y_j)|dy_j
\end{align*}
and denote $\mathcal{M}_{\emptyset} (\vec f)(x)=1.$
The related results in \cite{LOPTT} can be summarized as follows.
\begin{thm}\cite{LOPTT}\label{thm-loptt-1} Let $A=\{1,2,\cdots,m\}$, $1< p_{i}<
\infty$,
$\frac{1}{p}=\frac{1}{p_{1}}+\cdots+\frac{1}{p_{m}}$, and
$\vec{\omega}$ satisfy the $A_{\vec{p}}$ condition. Then
\begin{equation*}
{\big\|\mathcal{M}_A(\vec f)\big\|}_{L^p({\nu_{\vec{\omega} }})} \leqslant C
\prod_{i = 1}^m{\big\|f_i\big\|}_{L^{p_i}(\omega_i)}.
\end{equation*}
\end{thm}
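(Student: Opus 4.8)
The plan is to reduce the weighted bound for $\mathcal{M}_A$ to the known unweighted (or $A_p$-weighted) theory for the Hardy--Littlewood maximal function by exploiting the multilinear $A_{\vec{\omega}}$ structure. First I would record the key equivalence from \cite{LOPTT}: the condition $\vec{\omega}\in A_{\vec{p}}$ is equivalent to the simultaneous membership $\nu_{\vec{\omega}}\in A_{mp}$ together with $\omega_i^{1-p_i'}\in A_{mp_i'}$ for each $i$. This characterization is what converts a multilinear statement into $m$ linear ones. With $A=\{1,\dots,m\}$ the target weight is exactly $\nu_{\vec\omega}=\prod_i \omega_i^{p/p_i}$, so there is no mismatch between the weight appearing in $\mathcal{M}_A$ and the one in the norm; this is the case singled out in the remark following Theorem \ref{thmmain}.

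The core estimate is a pointwise/Hölder decoupling: for a cube $Q\ni x$,
\[
\prod_{j=1}^m \frac{1}{|Q|}\int_Q |f_j| \,=\, \prod_{j=1}^m\Big(\frac{1}{|Q|}\int_Q |f_j|\Big),
\]
and taking the supremum over $Q\ni x$ gives $\mathcal{M}_A(\vec f)(x)\le \prod_{j=1}^m M f_j(x)$, where $M$ is the Hardy--Littlewood maximal operator. Hence it suffices to prove
\[
\Big\|\prod_{j=1}^m Mf_j\Big\|_{L^p(\nu_{\vec\omega})} \le C\prod_{j=1}^m \|f_j\|_{L^{p_j}(\omega_j)}.
\]
By Hölder's inequality with exponents $p_j/p$ (which sum reciprocally to $1$) applied to the measure space, this follows once we know $\|Mf_j\|_{L^{p_j}(\omega_j)}\le C\|f_j\|_{L^{p_j}(\omega_j)}$ — but that is precisely the Muckenhoupt theorem, valid because $\omega_j\in A_{p_j}$. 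The one subtlety is that $\vec\omega\in A_{\vec p}$ does not by itself force each $\omega_j\in A_{p_j}$; so one cannot run the naive Hölder argument directly, and instead one must either assume the stronger hypothesis (as the remark after Definition of $A_{\vec p, A}$ notes, $\omega_i\in A_{p_i}$ does imply the relevant conditions) or use the sharper decoupling below.

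The honest route, carried out in \cite{LOPTT}, is to avoid the crude bound $\mathcal{M}_A\le \prod M f_j$ and instead interpolate. One shows that $\mathcal{M}_A$ maps $L^{p_1}(\omega_1)\times\dots\times L^{p_m}(\omega_m)\to L^{p,\infty}(\nu_{\vec\omega})$ at the endpoint configuration by a Calderón--Zygmund/covering argument using only the $A_{\vec p}$ testing condition on cubes, and then upgrades to the strong-type bound by multilinear Marcinkiewicz interpolation between such weak endpoints (perturbing the exponents $p_i$ slightly up and down while keeping $\vec\omega\in A_{\vec p}$, which is an open condition). The main obstacle is exactly this endpoint weak-type bound: one must run a Vitali-type covering adapted to the product of averages and control the bad set $\{\mathcal{M}_A(\vec f)>\lambda\}$ using the single scalar quantity $\big(\tfrac1{|Q|}\int_Q\nu_{\vec\omega}\big)^{1/p}\prod_j\big(\tfrac1{|Q|}\int_Q\omega_j^{1-p_j'}\big)^{1/p_j'}$, distributing the $\lambda$ among the $m$ factors via Hölder at scale $Q$. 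Everything else — the Hölder bookkeeping, the interpolation, and the reduction to cubes — is routine once that estimate is in place; since this is Theorem~3.7 of \cite{LOPTT}, I would simply cite it.
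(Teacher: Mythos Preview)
The paper does not prove this theorem at all; it is stated with the citation \cite{LOPTT} and used as a black box throughout. Your proposal ultimately does the same thing---you end by citing Theorem~3.7 of \cite{LOPTT}---so in that sense you match the paper exactly.

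The additional commentary you provide is accurate and goes well beyond what the paper offers. In particular, you correctly flag the key subtlety: the pointwise bound $\mathcal{M}_A(\vec f)\le\prod_j Mf_j$ together with H\"older would finish the proof \emph{if} each $\omega_j\in A_{p_j}$, but $\vec\omega\in A_{\vec p}$ does not imply this, so the na\"ive route fails. Your description of the actual \cite{LOPTT} argument---weak-type endpoint via the $A_{\vec p}$ testing condition, then multilinear interpolation---is essentially right. One small correction: the interpolation in \cite{LOPTT} is not carried out by perturbing the $p_i$ while keeping $\vec\omega$ fixed in $A_{\vec p}$; rather, one uses the reverse H\"older property of the $\omega_j^{1-p_j'}$ (which are in $A_{mp_j'}$) to find exponents $r_j<p_j$ at which the weak-type bound holds for the \emph{same} weights, and then interpolates against the trivial $L^\infty$ endpoint. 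But this is a detail of a proof neither you nor the paper actually gives.
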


\begin{thm}\cite{LOPTT}\label{thm-loptt-2} Let $T$ be an
$m$-linear Calder\'{o}n-Zygmund operator and
$\vec{\omega}$ satisfy the $A_{\vec{p}}$ condition, $1< p_{i}<
\infty$. Then
\begin{equation*}
{\big\|T(\vec{f})(x)\big\|}_{L^p({\nu_{\vec{\omega} }})} \leqslant C
\prod_{i = 1}^m{\big\|f_i\big\|}_{L^{p_i}(\omega_i)}.
\end{equation*}
\end{thm}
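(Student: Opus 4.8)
The plan is to deduce the weighted bound for $T$ from the weighted bound for the multilinear maximal function $\mathcal M_A$ with $A=\{1,\dots,m\}$ recorded in Theorem \ref{thm-loptt-1}, the bridge being a Fefferman--Stein sharp maximal function estimate. Since $1<p_i<\infty$ for every $i$, we have $\frac1p=\sum_{i=1}^m\frac1{p_i}<m$, hence $p>\frac1m$; fix once and for all an exponent $\delta$ with $0<\delta<\frac1m$, so that automatically $0<\delta<p$ and $q:=p/\delta>1$. The heart of the matter is the pointwise inequality
$$ M^{\#}\bigl(|T(\vec f\,)|^{\delta}\bigr)(x)\ \le\ C\,\bigl(\mathcal M_A(\vec f\,)(x)\bigr)^{\delta},\qquad x\in\R^n, $$
to be proved first for $f_j\in L^\infty$ with compact support. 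To establish it I would fix a cube $Q\ni x$ with center $x_Q$, let $Q^{*}$ be a fixed dilate of $Q$, write $f_j=f_j^{0}+f_j^{\infty}$ with $f_j^{0}=f_j\chi_{Q^{*}}$, and expand $\prod_{j=1}^m f_j=\sum_{\vec\alpha\in\{0,\infty\}^m}\prod_{j=1}^m f_j^{\alpha_j}$ into $2^m$ pieces. For the piece $\vec\alpha=(0,\dots,0)$ one subtracts the constant $0$ and invokes the weak endpoint bound $T:L^1\times\dots\times L^1\to L^{1/m,\infty}$ of Grafakos--Torres \cite{GrT} together with Kolmogorov's inequality, obtaining $C\prod_j\frac1{|Q^{*}|}\int_{Q^{*}}|f_j|\le C\,\mathcal M_A(\vec f\,)(x)$. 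For every piece with at least one $\alpha_j=\infty$ one subtracts the constant $T\bigl(\prod_j f_j^{\alpha_j}\bigr)(x_Q)$ and uses the kernel regularity conditions (ii)--(iii): splitting the complement of $Q^{*}$ into dyadic shells produces a summable factor $\sum_{k\ge1}2^{-k\varepsilon}$ and leaves a product of averages of the $|f_j|$ over dilates of $Q$, hence a multiple of $\mathcal M_A(\vec f\,)(x)$.

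Granting the sharp maximal estimate, the second step is the weighted Fefferman--Stein inequality: for $w\in A_\infty$ and $0<q<\infty$ one has $\|g\|_{L^q(w)}\le C\,\|M^{\#}g\|_{L^q(w)}$ whenever $\|Mg\|_{L^{q_0}(w)}<\infty$ for some finite $q_0$. I would apply it with $g=|T(\vec f\,)|^{\delta}$, $q=p/\delta$, and $w=\nu_{\vec\omega}$, so that, using the pointwise estimate,
$$ \|T(\vec f\,)\|_{L^{p}(\nu_{\vec\omega})}^{\delta}=\bigl\||T(\vec f\,)|^{\delta}\bigr\|_{L^{p/\delta}(\nu_{\vec\omega})}\le C\,\bigl\|M^{\#}\bigl(|T(\vec f\,)|^{\delta}\bigr)\bigr\|_{L^{p/\delta}(\nu_{\vec\omega})}\le C\,\bigl\|\mathcal M_A(\vec f\,)\bigr\|_{L^{p}(\nu_{\vec\omega})}^{\delta}, $$
and then Theorem \ref{thm-loptt-1} bounds the last quantity by $C\prod_{i=1}^m\|f_i\|_{L^{p_i}(\omega_i)}$. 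Two side facts are needed. First, $\nu_{\vec\omega}\in A_\infty$: this follows from the standard fact that $\vec\omega\in A_{\vec p}$ forces $\nu_{\vec\omega}\in A_{mp}$ (a H\"older argument from the definition of $A_{\vec p}$; see \cite{LOPTT}), whence $\nu_{\vec\omega}\in A_{mp}\subset A_{p/\delta}$ because $\delta<\frac1m$, so the weighted maximal operator is bounded on $L^{p/\delta}(\nu_{\vec\omega})$. Second, the finiteness $\|Mg\|_{L^{q_0}(w)}<\infty$ for $f_j$ bounded with compact support: choosing $q_0$ large (hence $r:=\delta q_0>p$ and $\nu_{\vec\omega}\in A_{q_0}$) reduces it to $\int_{\R^n}|T(\vec f\,)|^{r}\nu_{\vec\omega}<\infty$, which follows from the unweighted boundedness $T:L^{q_1}\times\dots\times L^{q_m}\to L^{rs'}$ of Grafakos--Torres \cite{GrT} for suitable exponents, the reverse H\"older inequality of $A_\infty$ weights near the origin, and the decay $|T(\vec f\,)(x)|\lesssim |x|^{-mn}$ against the polynomial volume growth of $\nu_{\vec\omega}$ at infinity (convergent since $r>p$). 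The general case $f_j\in L^{p_j}(\omega_j)$ is recovered by density.

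The step I expect to be the main obstacle is the sharp maximal estimate, and within it the bookkeeping over the $2^m$ pieces in which two or more of the $f_j$ are replaced by their far parts $f_j^{\infty}$ simultaneously: one has only the one-variable kernel-difference inequalities (ii)--(iii), so one must perturb a single variable at a time while the remaining far variables range over unions of dyadic shells, organizing the multiple sum so that each shell contributes its own geometric gain, uniformly in the number of far indices. By contrast the Fefferman--Stein step, the exponent arithmetic, the $A_{mp}$-membership of $\nu_{\vec\omega}$, and the density argument are all routine.
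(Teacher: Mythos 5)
This theorem is not proved in the paper at all: it is quoted verbatim from the cited work of Lerner, Ombrosi, P\'erez, Torres and Trujillo-Gonz\'alez \cite{LOPTT}, and your argument is exactly the proof given there --- the pointwise sharp maximal estimate $M^{\#}(|T(\vec f\,)|^{\delta})\lesssim \mathcal M(\vec f\,)^{\delta}$ for $0<\delta<1/m$ via the $2^m$-fold local/far decomposition, Kolmogorov's inequality with the $L^1\times\cdots\times L^1\to L^{1/m,\infty}$ endpoint, the weighted Fefferman--Stein inequality with $\nu_{\vec\omega}\in A_{mp}\subset A_{p/\delta}$, and the weighted bound for $\mathcal M$ (Theorem \ref{thm-loptt-1}). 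Your proposal is correct, including the a priori finiteness and density caveats, so it coincides in approach with the source the paper relies on.
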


\subsection \noindent\textbf{Generalized commutators}

The following three kinds of
commutators were firstly introduced and studied in\cite{PeTr}, \cite{LOPTT}  and \cite{PPTT}, respectively.
\begin{equation}\label{CT1}
T_{\vec{b}}(f)(x)=\int_{\mathbb{R}^n}[\prod_{j=1}^m(b_j(x)-b_j(y))]K(x,y)f(y)dy,
\end{equation}

\begin{equation}\label{CT2}
T_{\vec{b}}(\vec{f})(x)=\sum_{i=1}^m\big(b_iT(\vec{f})(x)
-T(f_1,\cdots,b_if_i,\cdots,f_m)(x)\big),
\end{equation}

\begin{equation}\label{CT3}
T_{\prod
b}(\vec{f})(x)=\int_{\mathbb{R}^{nm}}\prod_{j=1}^m(b_j(x)-b_j(y_j))
K(x,y_1,\dots,y_m)dy_1\dots dy_m.
\end{equation}

The other main results obtained by these papers are that all these commutators enjoy
a natural weighted strong and weighted endpoint
boundedness.
Although these commutators are of course different with each other and subsequently the proofs and the results are independent there. However,  one can always see that their proofs are all in similar patterns.
In \cite{XuY}, we introduced a kind of generalized commutators which contains the three type of commutators mentioned above.
\begin{dfn}\cite{XuY}
Let $T$ be an $m$-linear Calder\'{o}n-Zygmund operator with kernel
$K$. Let $S$ be a finite subset of $Z^+\times \{1,\dots,m\}$. The
commutators of $T$ are defined by
\begin{equation}\aligned\label{GC}
T_{\vec{b},S}(\vec{f})(x)=\int_{\mathbb{R}^{nm}}\prod_{(i,j)\in
S}(b_i(x)-b_i(y_j)) K(x,y_{1},\cdots,y_{m})
\prod_{j=1}^mf_j(y_j)d\vec{y},
\endaligned
\end{equation}
for all $f_j\in \mathcal{S}$, $j=1,\dots,m$, and all
$x\not\in\bigcap_{j=1}^{m}\operatorname{supp}f_{j}$.
\end{dfn}

Not very surprisingly, this kind of commutators also enjoys the natural weighted strong and weighted endpoint
boundedness. For instance, it holds that
\begin{thm}\cite{XuY}\label{thm-XuY}
Let $\vec{\omega}\in A_{\vec{p}}$ with
$\frac{1}{p}=\sum_{j=1}^m\frac{1}{p_j}$, $1<p_j<\infty$,
 $j=1,\dots,m$. Then there exists a
 constant $C$ such that for any $f_j\in L^p(\omega_j)$, it holds that
\begin{equation*}
\|T_{\vec{b},S}\vec{f})\|_{L^p(v_{\vec{\omega}})}\leq
C\prod_{(i,j)\in S}\|b_i\|_{BMO}
\prod_{j=1}^m\|f_j\|_{L^{p_j}(\omega_j)}.
\end{equation*}
\end{thm}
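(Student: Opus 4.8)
The plan is to combine a pointwise sharp maximal function estimate with the weighted Fefferman--Stein inequality and to run an induction on the cardinality $|S|$. The first observation is that $\vec\omega\in A_{\vec p}$ forces the target weight $\nu_{\vec\omega}$ to lie in $A_{mp}\subset A_\infty$ (the characterization of $A_{\vec p}$ from \cite{LOPTT}), so that $\|g\|_{L^p(\nu_{\vec\omega})}\le C\,\|M^{\#}_\delta g\|_{L^p(\nu_{\vec\omega})}$ for every $0<\delta<\infty$ and every $0<p<\infty$, provided the left-hand side is a priori finite, where $M^{\#}_\delta g=\big(M^{\#}(|g|^\delta)\big)^{1/\delta}$. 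To legitimize this finiteness, and all the formal manipulations below, one runs the whole argument first under the temporary hypotheses that each $b_i$ is bounded and each $f_j\in L^\infty$ has compact support; for such data $T_{\vec b,S}(\vec f)\in L^p(\nu_{\vec\omega})$, because $T$ is bounded there by Theorem~\ref{thm-loptt-2}, the commutator factors are then bounded, and the kernel size bound~(i) supplies decay at infinity. At the end one removes the hypothesis $b_i\in L^\infty$ via the truncations $b_i^{(N)}=\max\{-N,\min\{N,b_i\}\}$ (which satisfy $\|b_i^{(N)}\|_{BMO}\le C\|b_i\|_{BMO}$ and converge appropriately) and removes the restriction on the $f_j$ by density; the constant produced will depend only on the $A_{\vec p}$ characteristic of $\vec\omega$ and on $\prod_{(i,j)\in S}\|b_i\|_{BMO}$, so these limit passages are harmless.

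The heart of the matter is the pointwise inequality: for $0<\delta<1/m$ small enough and a suitable $\eta\in(\delta,1/m)$,
\[
M^{\#}_\delta\big(T_{\vec b,S}(\vec f)\big)(x)\le C\Big(\prod_{(i,j)\in S}\|b_i\|_{BMO}\Big)\Big[\mathcal M^{(S)}(\vec f)(x)+\sum_{R\subsetneq S}M_\eta\big(T_{\vec b,R}(\vec f)\big)(x)\Big],
\]
where $T_{\vec b,\emptyset}=T$ and $\mathcal M^{(S)}(\vec f)(x)=\sup_{Q\ni x}\prod_{j=1}^m\|f_j\|_{L(\log L)^{|S_j|},\,Q}$ with $S_j=\{i:(i,j)\in S\}$. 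To prove it, fix a cube $Q\ni x$ and a fixed dilate $Q^*$, put $\lambda_i=(b_i)_{Q^*}$, and use repeatedly the splitting identity
\[
T_{\vec b,R\cup\{(i,j)\}}(\vec f)=(b_i-\lambda_i)\,T_{\vec b,R}(\vec f)-T_{\vec b,R}\big(f_1,\dots,(b_i-\lambda_i)f_j,\dots,f_m\big),
\]
read in both directions, to rewrite $T_{\vec b,S}(\vec f)$ on $Q$ as a finite sum of terms of two kinds: (a) terms $\big(\prod_{(i,j)\in S\setminus R}(b_i(\cdot)-\lambda_i)\big)\,T_{\vec b,R}(\vec f)$ in which the un-split, $Q$-independent commutator $T_{\vec b,R}$, $R\subsetneq S$, acts on the original $\vec f$; and (b) a single innermost term $\pm\,T\big(f_1\prod_{i\in S_1}(b_i-\lambda_i),\dots,f_m\prod_{i\in S_m}(b_i-\lambda_i)\big)$ involving only the bare operator $T$. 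For a term of type (a) one uses the generalized H\"older inequality together with the John--Nirenberg inequality to absorb the oscillation factors at the cost of $\prod_{(i,j)\in S\setminus R}\|b_i\|_{BMO}$, leaving $M_\eta\big(T_{\vec b,R}(\vec f)\big)(x)$; here $\delta$, hence $\eta$, is chosen small enough that $\nu_{\vec\omega}\in A_{p/\eta}$, which makes $M_\eta$ bounded on $L^p(\nu_{\vec\omega})$. For the innermost term (b) one decomposes each $f_j$ into its $Q^*$-local and $Q^*$-far parts: the all-local piece is controlled by the weak $L^{1/m}$ endpoint bound of $T$ \cite{GrT}, Kolmogorov's inequality and the generalized H\"older inequality in the $L(\log L)$ scale; every piece with at least one far slot is controlled, after choosing the free constant in $M^{\#}_\delta$ to be its value at $x$, by the kernel regularity~(ii)--(iii), which over the dyadic annuli $2^kQ^*$ produces a factor decaying like $2^{-k\varepsilon}$ that dominates the $O(k^{|S|})$ growth coming from replacing $(b_i)_{Q^*}$ by $(b_i)_{2^kQ^*}$. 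In both cases the outcome is $\le C\big(\prod_{(i,j)\in S}\|b_i\|_{BMO}\big)\mathcal M^{(S)}(\vec f)(x)$.

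Granting the pointwise inequality, the theorem follows by induction on $|S|$: the base case $|S|=0$ is Theorem~\ref{thm-loptt-2}; for the inductive step, take $L^p(\nu_{\vec\omega})$-norms, bound the left side by Fefferman--Stein, use the boundedness of $\mathcal M^{(S)}$ from $L^{p_1}(\omega_1)\times\dots\times L^{p_m}(\omega_m)$ to $L^p(\nu_{\vec\omega})$ for $\vec\omega\in A_{\vec p}$ (the standard $L(\log L)$ refinement of Theorem~\ref{thm-loptt-1}) on the first term, and, for the second term, the induction hypothesis applied to each $T_{\vec b,R}$ with $R\subsetneq S$ (for $R=\emptyset$ this is again Theorem~\ref{thm-loptt-2}); since $\prod_{(i,j)\in S\setminus R}\|b_i\|_{BMO}\cdot\prod_{(i,j)\in R}\|b_i\|_{BMO}=\prod_{(i,j)\in S}\|b_i\|_{BMO}$, the constant propagates correctly and the induction closes.

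The step I expect to be the main obstacle is the pointwise inequality of the second paragraph — specifically, organizing the repeated splitting so that kernel smoothness is invoked only on the bare operator $T$ acting on far data (where it supplies the summable factor $2^{-k\varepsilon}$), while the commutator structure carried by the outer symbols is always disposed of by pulling out oscillation factors against full, $Q$-independent operators rather than by a far-field analysis of a commutator; and, hand in hand with this, verifying that the combinatorics of $S$ (in particular that one $b_i$ may attach to several input slots, producing iterated oscillations) yields exactly the Orlicz maximal operator $\mathcal M^{(S)}$ with the correct distribution of the norms $\|b_i\|_{BMO}$. The a priori finiteness reduction is the remaining, routine, technical point.
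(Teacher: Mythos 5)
This theorem is quoted in the paper from \cite{XuY} without proof, and the argument given there (following the pattern of \cite{LOPTT} and \cite{PPTT}) is exactly the one you outline: a pointwise estimate of $M^{\#}_\delta\big(T_{\vec b,S}(\vec f)\big)$ by an $L(\log L)$-type maximal function of $\vec f$ plus $M_\eta$ of the lower-order commutators $T_{\vec b,R}$, $R\subsetneq S$, combined with the Fefferman--Stein inequality for $\nu_{\vec\omega}\in A_\infty$ and induction on $|S|$, with the a priori finiteness handled by truncation/density as you indicate. Your proposal is correct and takes essentially the same route as the cited source.
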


\subsection\noindent\textbf{Multilinear square operators}

Recall that, when it comes to the linear theory, we have basically three typical square operators,
the Littlewood-Paley $g$-function, the Marcinkiewicz integral and the $g_{\lambda}^*$-function. Correspondingly, we may define three kinds of multilinear square operators. To begin with, we first introduce two kinds of kernels.
\begin{dfn}\label{def Marcinkiewicz kernel}
Let $K$ be a function defined on $\mathbb{R}^n\times \mathbb{R}^{mn}$ with $\supp K\subseteq
\mathcal{B}:=\{(x,y_1,\dots,y_m):\sum_{j=1}^m|x-y_j|^2\leq 1\}$. $K$ is called a multilinear Marcinkiewicz kernel if for some $0<\delta<mn$ and some positive constants $A$, $\gamma_0$, and $B_1$,
\begin{enumerate}
	\item[\emph{(a)}]$
|K(x,\vec{y})|\leq\frac{A}
{(\sum_{j=1}^{m}|x-y_{j}|)^{mn-\delta}};
$	\item[\emph{(b)}]$
|K(x,\vec{y})-K(x,y_{1},\dots,y_i',\dots,y_{m})|
\leq\frac{A|y_i-y_i'|^{\gamma_0}}
{(\sum_{j=1}^{m}|x-y_{j}|)^{mn-\delta+\gamma_0}},
$;
	\item[\emph{(c)}]$
|K(x,\vec{y})-K(x',y_{1},\dots,y_{m})|
\leq\frac{A|x-x'|^{\gamma_0}}
{(\sum_{j=1}^{m}|x-y_{j}|)^{mn-\delta+\gamma_0}},
$,\end{enumerate}
where (b) holds whenever $(x,y_1,\dots,y_m)\in \mathcal{B}$ and
$|y_i-y_i'|\leq\frac{1}{B_1}|x-y_i|$ for all $0\leq i\leq m$,
and (c) holds whenever $(x,y_1,\dots,y_m)\in \mathcal{B}$ and
$|x-x'|\leq\frac{1}{B_1}\max_{1\leq j \leq m}|x-y_{j}|$.
\end{dfn}

\begin{dfn}\label{def littlewoodpayley kernel}
Let $K(x,y_1,\dots,y_m)$ be a locally integrable function defined away from the diagonal
$x=y_1=\dots=y_m$ in $(\mathbb{R}^n)^{m+1}$.
$K$ is called a multilinear Littlewood-Paley kernel if for some positive constants $A$, $\gamma_0$, $\delta$, and $B_1$, it holds that 
\begin{enumerate}
	\item[\emph{(d)}]$
|K(x,\vec{y})|\leq\frac{A}
{(1+\sum_{j=1}^{m}|x-y_{j}|)^{mn+\delta}};
$\item[\emph{(e)}]$
|K(x,\vec{y})-K(x,y_{1},\dots,y_i',\dots,y_{m})|
\leq\frac{A|y_i-y_i'|^{\gamma_0}}
{(1+\sum_{j=1}^{m}|x-y_{j}|)^{mn+\delta+\gamma_0}}
$;
\item[\emph{(f)}]$
|K(x,\vec{y})-K(x',y_{1},\dots,y_{m})|
\leq\frac{A|x-x'|^{\gamma_0}}
{(1+\sum_{j=1}^{m}|x-y_{j}|)^{mn+\delta+\gamma_0}},
$\end{enumerate}
where (e) holds whenever
$|y_i-y_i'|\leq\frac{1}{B_1}|x-y_i|$ and for all $0\leq i\leq m$,
and (f) holds whenever
$|x-x'|\leq\frac{1}{B_1}\max\limits_{1\leq j \leq m}|x-y_{j}|$.
\end{dfn}

Given a kernel $K$, denote
$K_t(x,y_1,\dots,y_m)={t^{-mn}}K(\frac{x}{t},\frac{y_1}{t},\dots,\frac{y_m}{t})$.
Define the multilinear square function by
\begin{equation*} \label{T}
\mathcal{G}(\vec{f})(x)=\big(\int_{0}^{\infty}|\int_{(\mathbb{R}^{n})^m}K_t(x, y_1,\cdots,y_m)
\prod_{j=1}^mf_j(y_j)dy_1\dots dy_m|^2\frac{dt}{t}\big)^{1/2},
\end{equation*}
for any $\vec{f}=(f_{1},\cdots,f_{m})\in
\mathcal{S}(\mathbb{R}^n)\times\mathcal{S}(\mathbb{R}^n)\times\cdots\times\mathcal{S}(\mathbb{R}^n)$ and all $x\notin \bigcap_{j=1}^m \texttt{supp} f_j.$

Suppose that $\mathcal{G}$ is bounded at some point. $\mathcal{G}$ is called a multilinear Marcinkiewicz operator when $K$ is a  multilinear Marcinkiewicz kernel. $\mathcal{G}$ is called a multilinear Littlewood-Paley $g$-function when $K$ is a multilinear Littlewood-Paley kernel, 

Meanwile, the multilinear square $g_{\lambda}^*$-function associated with the above kernel $K$ is defined by
\begin{equation*}\aligned \label{T_k}
\mathcal{G}_{\lambda}^*(\vec{f})(x)&=\bigg(\iint_{\mathbb{R}^{n+1}_+}\big(\frac{t}{|x-z|+t}\big)^{n\lambda}
|\int_{\mathbb{R}^{nm}}K_t(z,\vec{y})
\prod_{j=1}^mf_j(y_j)d\vec{y}|^2\frac{dzdt}{t^{n+1}}\bigg)^{\frac12},
\endaligned
\end{equation*}
whenever $\vec{f}=(f_{1},\cdots,f_{m})\in
\mathcal{S}(\mathbb{R}^n)\times\mathcal{S}(\mathbb{R}^n)\times\cdots\times\mathcal{S}(\mathbb{R}^n)$ and $x\notin \bigcap_{j=1}^m \supp f_j$, with itself bounded at some point.

See \cite{ChXY} \cite{XPY} \cite{ShXY} respectively for the convolution type of the above three kinds of multilinear square operators, where endpoint estimates as well as the weighted boundedness, like Theorem \ref{thm-loptt-2}, were obtained. Although each of the proofs is complete and independent and somehow seems quite different, once again, we can acttually tackle these square operators in a unified manner \cite{XuY2}, by viewing all of them as vector-valued multilinear square operators. The following two lemmas are the crucial estimates.
\begin{lem}\cite{XuY2}\label{lem-XuY2-1}
When $K$ is either a multilinear Littlewood-Paley kernel or multilinear Marcinkiewicz kernel, there exists some positive constants $\gamma$, $A$, and $B$, such that
\begin{equation*}\aligned
\big(\int_{0}^{\infty}|K_t(x,\vec{y})|^2\frac{dt}{t}\big)^{\frac12}
\leq \frac{A}
{(\sum_{j=1}^{m}|x-y_{j}|)^{mn}},
\endaligned
\end{equation*}
\begin{equation*}\aligned
\big(\int_{0}^{\infty}|K_t(z,\vec{y})-K_t(x,\vec{y})|^2\frac{dt}{t}\big)^{\frac12}
\leq \frac{A|z-x|^{\gamma}}
{(\sum_{j=1}^{m}|x-y_{j}|)^{mn+\gamma}},
\endaligned
\end{equation*}
whenever $|z-x|\leq \frac1B\max_{j=1}^m\{|x-y_j|\}$;
and
\begin{equation*}\aligned
\big(\int_{0}^{\infty}
|K_t(x,\vec{y})-K_t(x,y_{1},&\dots,y_i',\dots,y_{m})|^2\frac{dt}{t}\big)^{\frac12}
\leq \frac{A|y_i-y_i'|^{\gamma}}
{(\sum_{j=1}^{m}|x-y_{j}|)^{mn+\gamma}}
\endaligned\label{condition 12}
\end{equation*}
for any $i\in\{1,\dots,m\}$, whenever $|y_i-y_i'|\leq \frac{|x-y_i|}B$.
\end{lem}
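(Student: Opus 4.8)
The plan is to verify the three displayed inequalities one at a time; together they say exactly that the $L^{2}\!\big((0,\infty),\tfrac{dt}{t}\big)$-valued kernel $(x,\vec y)\mapsto\big(K_{t}(x,\vec y)\big)_{t>0}$ is an $m$-linear Calder\'on--Zygmund kernel with some regularity exponent $\gamma>0$ and some large constant $B$, which is the usual device for realizing a square function as a vector-valued singular integral. Each case follows from two moves. First, the scaling identity $K_{t}(x,\vec y)=t^{-mn}K(x/t,\vec y/t)$ together with the pointwise hypotheses on $K$ turns the bounds on $K$ into pointwise bounds on $|K_{t}(x,\vec y)|$ (and on the two kinds of kernel differences): in the Littlewood--Paley case one gets expressions of the schematic shape $\mathrm{const}\cdot t^{\delta}\big/(t+R)^{mn+\delta+\cdots}$, and in the Marcinkiewicz case $\mathrm{const}\cdot t^{-\delta}\big/R^{mn-\delta+\cdots}$ supported in $\{t\ge\rho\}$, where I abbreviate $R=\sum_{j=1}^{m}|x-y_{j}|$ and $\rho=\big(\sum_{j=1}^{m}|x-y_{j}|^{2}\big)^{1/2}$, with $\rho\le R\le\sqrt m\,\rho$. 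Second, after squaring and multiplying by $\tfrac{dt}{t}$ one performs the substitution $t=Rs$ (or $t=\rho s$), which collapses each $t$-integral to a convergent Beta integral $\int_{0}^{\infty}s^{a-1}(1+s)^{-b}\,ds$ with $0<a<b$; this leaves precisely the asserted power of $R$ and a harmless dimensional constant.

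Concretely, for a multilinear Littlewood--Paley kernel the size bound (d) gives $|K_{t}(x,\vec y)|\le A\,t^{\delta}(t+R)^{-(mn+\delta)}$, whence the scaling computation yields $\int_{0}^{\infty}|K_{t}(x,\vec y)|^{2}\tfrac{dt}{t}\le CA^{2}R^{-2mn}$, i.e.\ the first inequality. For the difference estimates the key point is that hypothesis (f) is scale invariant, so once $|z-x|\le\tfrac1B\max_{j}|x-y_{j}|$ with $B\ge B_{1}$ it may be applied to $K(\cdot/t,\vec y/t)$ for \emph{every} $t>0$, giving $|K_{t}(z,\vec y)-K_{t}(x,\vec y)|\le A|z-x|^{\gamma_{0}}\,t^{\delta}(t+R)^{-(mn+\delta+\gamma_{0})}$; the same $t=Rs$ computation then gives the second inequality with $\gamma=\gamma_{0}$, and the third is identical with (e) in place of (f).

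For a multilinear Marcinkiewicz kernel the extra feature is the support condition $\supp K\subseteq\mathcal B$, which after rescaling says $K_{t}(x,\vec y)=0$ unless $t\ge\rho$; on that range (a) gives $|K_{t}(x,\vec y)|\le A\,t^{-\delta}R^{-(mn-\delta)}$, and truncating the $t$-integral at $t\ge\rho\sim R$ produces $\int_{\rho}^{\infty}t^{-2\delta}\tfrac{dt}{t}\sim\rho^{-2\delta}\sim R^{-2\delta}$, exactly offsetting the ``wrong sign'' of $\delta$ and giving the first inequality. For the two difference estimates I would split $\int_{0}^{\infty}\cdots\tfrac{dt}{t}$ at the support thresholds $\rho_{x}=\big(\sum_{j}|x-y_{j}|^{2}\big)^{1/2}$ and $\rho_{z}=\big(\sum_{j}|z-y_{j}|^{2}\big)^{1/2}$: for $t<\min(\rho_{x},\rho_{z})$ the integrand vanishes; for $t\ge\max(\rho_{x},\rho_{z})$ both rescaled points lie in $\mathcal B$, so (c) (resp.\ (b)) applies and, with the truncation $t\ge\max(\rho_{x},\rho_{z})\gtrsim R$, integrates to $\lesssim A^{2}|z-x|^{2\gamma_{0}}R^{-(2mn+2\gamma_{0})}$; and on the intermediate interval $\min(\rho_{x},\rho_{z})\le t<\max(\rho_{x},\rho_{z})$ only one term survives, so using (a) on that term and the comparabilities $\rho_{x}\sim\rho_{z}\sim R$, $|\rho_{z}-\rho_{x}|\lesssim|z-x|$ (valid once $B$ is large, since $|z-x|\le R/B$) this piece is $\lesssim A^{2}|z-x|\,R^{-(2mn+1)}$, which is $\le A^{2}\big(|z-x|^{\gamma}R^{-(mn+\gamma)}\big)^{2}$ for any $\gamma\le\tfrac12$ because $|z-x|/R<1$. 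Summing the pieces gives the estimates with $\gamma=\min(\gamma_{0},\tfrac12)$.

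I expect the only real work to be the Marcinkiewicz support bookkeeping in the two difference estimates: one has to keep track of which of $K_{t}(z,\cdot)$, $K_{t}(x,\cdot)$ vanishes on which $t$-range, check $\rho_{x}\sim\rho_{z}\sim R$ and $|\rho_{z}-\rho_{x}|\lesssim|z-x|$, and fix $B=\max(B_{1},Cm)$ large enough that both the regularity hypotheses and these comparabilities are available. Once that is set up every remaining integral is a one-line computation, and the precise value of $\gamma$ will not matter, since on the region where the difference estimates are used $|z-x|/R<1$ and so shrinking $\gamma$ only weakens the right-hand side.
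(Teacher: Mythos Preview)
Your argument is correct. The paper itself does not prove this lemma; it is quoted verbatim from \cite{XuY2} and invoked as a black box, so there is no proof in the present paper to compare against. What you outline is exactly the standard computation one finds in that reference: scale $K_t(x,\vec y)=t^{-mn}K(x/t,\vec y/t)$, use the pointwise kernel hypotheses, and reduce each $\int_0^\infty(\cdot)^2\,\frac{dt}{t}$ to a Beta integral via $t=Rs$. In the Littlewood--Paley case this is immediate because the regularity hypotheses (e) and (f) are scale invariant, so they apply for every $t>0$. In the Marcinkiewicz case your three-range splitting at $\min(\rho_x,\rho_z)$ and $\max(\rho_x,\rho_z)$ is the right device, and the comparabilities $\rho_x\sim\rho_z\sim R$ and $|\rho_x-\rho_z|\lesssim|z-x|$ (valid once $B\ge \max(B_1,Cm)$) are exactly what is needed to control the middle range; your conclusion $\gamma=\min(\gamma_0,\tfrac12)$ is fine since only the existence of some $\gamma>0$ is claimed.
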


\begin{lem}\cite{XuY2}\label{lem-XuY2-2}
When $K$ is multilinear Littlewood-Paley kernel, there exists some positive constants $\gamma$, $A$, and $B$, such that
\begin{equation*}\aligned
\big(\iint_{\mathbb{R}^{n+1}_+}\big(\frac{t}{|x-z|+t}\big)^{n\lambda}
|K_t(z,\vec{y})|^2\frac{dzdt}{t^{n+1}}\big)^{\frac1{2}}
\leq \frac{A}
{(\sum_{j=1}^{m}|x-y_{j}|)^{mn}};
\endaligned
\end{equation*}
\begin{equation*}\aligned
\big(\iint_{\mathbb{R}^{n+1}_+}\big(\frac{t}{|z|+t}\big)^{n\lambda}
|K_t(x-z,\vec{y})-&
K_t(x'-z,\vec{y})|^2\frac{dzdt}{t^{n+1}}\big)^{\frac1{2}}
\leq \frac{A|x-x'|^{\gamma}}
{(\sum_{j=1}^{m}|x-y_{j}|)^{mn+\gamma}},
\endaligned
\end{equation*}
whenever $|x-x'|\leq \frac1B\max_{j=1}^m\{|x-y_j|\}$;
and if $|y_i-y_i'|\leq \frac{|x-y_i|}B$, it holds that
\begin{equation*}\aligned
\big(\iint_{\mathbb{R}^{n+1}_+}\big(\frac{t}{|x-z|+t}\big)^{n\lambda}
|K_t(z,\vec{y})-&
K_t(z,y_1,\dots,y_i',\dots,y_m)|^2\frac{dzdt}{t^{n+1}}\big)^{\frac1{2}}\\
&\quad\leq \frac{A|y_i-y_i'|^{\gamma}}
{(\sum\limits_{j=1}^{m}|x-y_{j}|)^{mn+\gamma}}.
\endaligned
\end{equation*}
\end{lem}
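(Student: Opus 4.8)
Write $R:=\sum_{j=1}^m|x-y_j|$ and recall the dilation relation $K_t(z,\vec y)=t^{-mn}K(z/t,\vec y/t)$. The plan is to deduce the three inequalities from the structural bounds (d)--(f) of Definition~\ref{def littlewoodpayley kernel} by the same scheme that produces Lemma~\ref{lem-XuY2-1}, inserting one extra dyadic decomposition to absorb the cone weight. First, rescaling (d) turns it into the pointwise estimate
\[
|K_t(z,\vec y)|\;\le\;\frac{A\,t^{\delta}}{\big(t+\sum_{j=1}^m|z-y_j|\big)^{mn+\delta}},
\]
and rescaling (e), (f) gives, on their admissible ranges $|y_i-y_i'|\le|z-y_i|/B_1$, respectively $|x-x'|\le\frac1{B_1}\max_j|(x-z)-y_j|$, the analogous bounds carrying an extra numerator factor $|y_i-y_i'|^{\gamma_0}$ (resp. $|x-x'|^{\gamma_0}$) and one further power of $t+\sum_j|z-y_j|$ (resp. $t+\sum_j|(x-z)-y_j|$) in the denominator. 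After this reduction each quantity in the lemma becomes an explicit integral of such expressions against $\big(\tfrac{t}{|x-z|+t}\big)^{n\lambda}\tfrac{dz\,dt}{t^{n+1}}$, the second one against $\big(\tfrac{t}{|z|+t}\big)^{n\lambda}\tfrac{dz\,dt}{t^{n+1}}$.

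For the size estimate, decompose $\mathbb R^{n+1}_+=\bigcup_{k\ge0}E_k$ with $E_k=\{(z,t):2^{k-1}t\le|x-z|+t<2^kt\}$, on which $\big(\tfrac{t}{|x-z|+t}\big)^{n\lambda}\lesssim2^{-kn\lambda}$ and $z\in B(x,2^kt)$. On each $E_k$ split the $t$-integral at the scale $2^kt\sim R$. When $2^kt\lesssim R$ one has $\sum_j|z-y_j|\ge R-m\cdot2^kt\gtrsim R$, so $|K_t(z,\vec y)|^2\lesssim A^2t^{2\delta}R^{-2mn-2\delta}$, and integrating over $B(x,2^kt)$ and then in $t$ contributes a term $\lesssim2^{-k(n(\lambda-1)+2\delta)}R^{-2mn}$. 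When $2^kt\gtrsim R$ one uses $t+\sum_j|z-y_j|\ge t$ together with the elementary measure bound $|\{z:\sum_j|z-y_j|\le r\}|\le|\{z:|z-y_1|\le r\}|\le c_nr^n$ (applied dyadically in $\sum_j|z-y_j|$) to obtain $\int_{B(x,2^kt)}|K_t(z,\vec y)|^2\,dz\lesssim A^2t^{\,n-2mn}$, which after the $t$-integral contributes $\lesssim2^{-kn(\lambda-2m)}R^{-2mn}$. Summing the two geometric series in $k$ is legitimate precisely because $\lambda>2m$ (the range in which $\mathcal G^*_\lambda$ is considered), and yields $\iint(\cdots)|K_t(z,\vec y)|^2\tfrac{dz\,dt}{t^{n+1}}\lesssim A^2R^{-2mn}$; taking square roots gives the first inequality.

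The two smoothness estimates follow the same template with one additional splitting of the $z$-integration according to whether the rescaled Hölder bound is available. For the $y_i$-regularity estimate, on $\{z:|z-y_i|>B_1|y_i-y_i'|\}$ the rescaled form of (e) applies; its explicit factor $|y_i-y_i'|^{\gamma_0}$ and the extra denominator power convert the computation above into a bound $\lesssim|y_i-y_i'|^{2\gamma_0}R^{-2mn-2\gamma_0}$ (a preliminary reduction of $\gamma_0$, which the kernel conditions tolerate, keeps the relevant series convergent). On the complement $\{z:|z-y_i|\le B_1|y_i-y_i'|\}$, whose $z$-measure is $\lesssim|y_i-y_i'|^n$ and on which $|x-z|\sim|x-y_i|$ since $|y_i-y_i'|\le|x-y_i|/B$ with $B$ large, bound the difference by the sum of the two size bounds; splitting the $t$-integral at the scales $|y_i-y_i'|$ and $|x-y_i|$ and using the measure bound again, the small size of this region produces a gain $|y_i-y_i'|^{2\gamma}$ for a suitable $\gamma>0$ (any $\gamma\le\min\{\gamma_0,\delta,n(\lambda-2m)/2,n/2\}$ works, positive exactly because $\lambda>2m$). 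The $x$-regularity estimate is identical after the change of variables $z\mapsto x-z$, which recentres the cone weight at the origin — this is why that inequality is stated with $\big(\tfrac{t}{|z|+t}\big)^{n\lambda}$ — using the rescaled (f) on $\{z:\max_j|(x-z)-y_j|>B_1|x-x'|\}$ (where moreover $|z|\gtrsim R$) and the size bounds on its complement.

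The one genuinely delicate point is keeping track of the three competing scales $t$, $|x-z|$ (or $|z|$) and $R$ simultaneously, so that every geometric series produced by the Whitney decomposition of the cone converges; this is where the range $\lambda>2m$ enters, and where in the regime $2^kt\gtrsim R$ one must replace the crude volume $|B(x,2^kt)|=c_n(2^kt)^n$ by the sharp estimate for the measure of the set on which $K_t$ is not yet small. Once the decompositions are in place, each individual piece is an elementary one-variable integral and the argument merely repeats the scheme already used for Lemma~\ref{lem-XuY2-1}.
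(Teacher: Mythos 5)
First, a point of comparison: the paper itself gives no proof of Lemma \ref{lem-XuY2-2} — it is quoted from \cite{XuY2} — so your argument has to be judged on its own merits rather than against an in-paper proof. Your route is the standard (and, up to presentation, the same as the cited source's) one: rescale conditions (d)--(f) of the multilinear Littlewood--Paley kernel into bounds with denominator $t+\sum_j|z-y_j|$, then run a dyadic (Whitney) decomposition of the cone weight, $2^{k-1}t\le|x-z|+t<2^kt$, and split each piece at the scale $2^kt\sim R:=\sum_j|x-y_j|$. Your numerology checks out: the size estimate produces geometric series with ratios $2^{-k(n(\lambda-1)+2\delta)}$ and $2^{-kn(\lambda-2m)}$, so the statement implicitly requires $\lambda>2m$ (you are right to flag this; it is the standing assumption for $\mathcal{G}^*_{\lambda}$), the reduction of $\gamma_0$ so that $2\gamma_0<n(\lambda-2m)$ is legitimate on the admissible range of (e)--(f), and the final exponent $\gamma\le\min\{\gamma_0,\delta,n(\lambda-2m)/2,n/2\}$ is exactly what a careful treatment of the exceptional sets yields. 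One small slip of phrasing: the observation $|z|\gtrsim R$ in the $x$-regularity estimate holds on the \emph{complement} of the set where (f) applies (i.e.\ on $\{z:\max_j|(x-z)-y_j|\le B_1|x-x'|\}$, using $|x-x'|\le\frac1B\max_j|x-y_j|$ with $B>B_1$), not on the set where you invoke (f); that is where it is actually needed.

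The one genuinely underspecified step is the exceptional set in the third inequality, $\{z:|z-y_i|\le B_1|y_i-y_i'|\}$. There you argue via $|x-z|\sim|x-y_i|$ plus the measure bound, but the target decay is in $R$, while the hypothesis only gives $|y_i-y_i'|\le|x-y_i|/B$, and $|x-y_i|$ may be much smaller than $R$; the cone weight then only decays like a power of $t/|x-y_i|$, which by itself is insufficient. The repair stays inside your scheme but needs a case distinction: if $|x-y_i|\ge R/(2m)$ your argument is fine (the weight supplies the $R$-decay, and the measure bound gives the gain in $|y_i-y_i'|$); if $|x-y_i|<R/(2m)$, then on that exceptional set one has $|x-z|\lesssim R/m$ and hence $\sum_j|z-y_j|\gtrsim R$, so it is the rescaled size bound (d), with denominator $(t+cR)^{mn+\delta}$, that supplies the $R$-decay, and combining it with the weight at scale $|x-y_i|$ and the measure $\lesssim|y_i-y_i'|^n$ gives a gain of order $|y_i-y_i'|^{2\delta}R^{-2mn-2\delta}$ (this is precisely the source of the constraint $\gamma\le\delta$ appearing in your minimum). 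With this case analysis spelled out, your proposal is a correct and complete proof of the lemma.
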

Now, we define the generalized commutators of multilinear square operators.
\begin{align*}
\mathcal{G}_{\vec{b},S}(\vec f)(x)
&=\big(\int_{0}^{\infty}|\int_{(\mathbb{R}^{n})^m}\prod_{(i,j)\in
S}(b_i(x)-b_i(y_j))K_t(x, \vec{y})\\&
\quad\times\prod_{j=1}^mf_j(y_j)dy_1\dots dy_m|^2\frac{dt}{t}\big)^{1/2},
\end{align*}
and
\begin{align*}
\mathcal{G}^*_{\lambda,\vec{b},S}(\vec f)(x)&=\bigg(\iint_{\mathbb{R}^{n+1}_+}\big(\frac{t}{|x-z|+t}\big)^{n\lambda}
|\int_{\mathbb{R}^{nm}}\prod_{(i,j)\in
S}(b_i(x)-b_i(y_j))K_t(z,\vec{y})\\&\quad\times
\prod_{j=1}^mf_j(y_j)d\vec{y}|^2\frac{dzdt}{t^{n+1}}\bigg)^{\frac12}.
\end{align*}

\section{vector-valued theory}

Let $B$ be a quasi-Banach space, $0<p<\infty$. For a $B$-valued strongly measurable function defined on $\R^n$, define
\begin{align*}
L^{p}_B&=\bigg\{f:\big(\int_{\mathbb{R}^n}\|f(x)\|^p_{B}dx\big)^{\frac1p}<\infty\bigg\}
=\bigg\{f:\|f\|_{L^p_{B}}<\infty\bigg\}.
\end{align*}

We can extend the multilinear Calder\'{o}n-Zygmund theory to the vector-valued case without much extra efforts.
Let $K(x,y_1,\dots,y_m)$ be a $B$-valued locally integrable function defined away from the diagonal
$x=y_1=\dots=y_m$ in $(\mathbb{R}^n)^{m+1}$.
We define the $B$-valued multilinear Calder\'{o}n-Zygmund operator $T$ in the way that
\begin{equation*}
T(f_{1},\cdots,f_{m})(x)
=\int_{(\mathbb{R}^{n})^{m}}K(x,y_{1},\dots,y_{m})f_{1}(y_{1})\cdots
f_{m}(y_{m})dy_{1}\dots dy_{m}
\end{equation*}
for $f_j\in \mathcal{S}(\mathbb{R}^{n})$, $1\leq j\leq m$, and
$x\not\in\bigcap_{j=1}^{m}\operatorname{supp}f_{j}$, with the kernel
satisfying, for some $\varepsilon,A_{\varepsilon}>0$,
\begin{enumerate}
\item[\emph{(i)}]$
\|K(x,y_1,\dots,y_m)\|_{B}\leq\frac{C}{(\sum\limits_{j=1}^m|x-y_j|)^{mn}};$
\item[\emph{(ii)}]
$\|K(x,y_1,\dots,y_i,\dots,y_m)-K(x,y_1,\dots,y_i',\dots,y_m)\|_{B}
\leq\frac{C|y_i-y_i'|^{\varepsilon}}{(\sum^m_{j=1}|x-y_j|)^{mn+\varepsilon}}$\\
whenever $|x-x'|\leq\frac{1}{2}\max_{1\leq j \leq m}|x-y_j|$;
\item[\emph{(iii)}]
$\|K(x,y_1,\dots,y_m)-K(x',y_1,\dots,y_m)\|_{B}
\leq\frac{C|x-x'|^{\varepsilon}}{(\sum^m_{j=1}|x-y_j|)^{mn+\varepsilon}}$\\
whenever
$|x-x'|\leq\frac{1}{2}\sum_{j=1}^m|x-y_j|$,
\end{enumerate}
and if $T$ is bounded at some point.

Similarly, such a $B$-valued operator $T$ is said to be \emph{bounded at some point} if it is bounded from $L^{p_1}\times\dots\times L^{p_m}$ to $L^{p}_B$
for some $1\leq p_i\leq\infty$,
$\frac1p=\sum_{j=1}^m\frac1{p_m}$ and $p<\infty$.

We may get the vector-valued version of the Theorem \ref{thm-loptt-2}. As this is almost a step by step copies of the original proof, except for just adding the norm $\|\cdot\|_B$ step by step, we omit the proof. One can see \cite{BeCP}, \cite{RuRT},  \cite{Ha1}, \cite{Ha2} for part of the ideas.
\begin{thm}\label{thm-bounds-vec} Let $T$ be a $B$-valued
$m$-linear Calder\'{o}n-Zygmund operator,
$\frac{1}{p}=\frac{1}{p_{1}}+\cdots+\frac{1}{p_{m}}$, and
$\vec{\omega}$ satisfy the $A_{\vec{p}}$ condition, $1< p_{i}<
\infty$. Then there exists a
 constant $C$ such that for any $f_j\in L^p(\omega_j)$, it holds that
\begin{equation*}
{\big\|T(\vec{f})(x)\big\|}_{L^p({\nu_{\vec{\omega} }}),B} \leqslant C
\prod_{i = 1}^m{\big\|f_i\big\|}_{L^{p_i}(\omega_i)}.
\end{equation*}
\end{thm}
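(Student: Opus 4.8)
\textbf{Proof proposal for Theorem \ref{thm-bounds-vec}.}

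The plan is to reduce the vector-valued statement to the scalar one of Theorem \ref{thm-loptt-2} by a near-verbatim transfer of the Lerner--Ortiz-P\'erez-Torres-Trujillo argument, with the single change that every scalar absolute value $|K(\dots)|$ appearing in a kernel estimate is replaced by the Banach-space norm $\|K(\dots)\|_B$, and the output quantity $|T(\vec f)(x)|$ is everywhere replaced by $\|T(\vec f)(x)\|_B$. Concretely, I would first record that the Cotlar-type pointwise inequality
\begin{equation*}
\|T(\vec f)(x)\|_B \le C\,\mathcal{M}_{\delta}\big(T(\vec f)\big)(x) + C\,\mathcal{M}(\vec f)(x),
\end{equation*}
or rather its sharp-maximal-function refinement $M^\#_\delta\big(\|T(\vec f)\|_B\big)(x)\le C\,\mathcal{M}(\vec f)(x)$ for $0<\delta<1/m$, goes through unchanged: in the original proof one splits each $f_j=f_j^0+f_j^\infty$ over a cube and its complement, and the only inputs used are (a) the $L^1\times\cdots\times L^1\to L^{1/m,\infty}$ bound for $T$ — which in the vector-valued setting follows from the vector-valued Calder\'on-Zygmund decomposition exactly as in Grafakos--Torres, since it only needs the size estimate (i) and the smoothness estimate (iii) for $\|K\|_B$ — and (b) the kernel estimates (i)--(iii), all of which are now assumed for $\|K\|_B$. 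The triangle inequality in $B$ is what lets one pull the norm inside the integral defining $T$ on the "far" pieces, so every term bounded by $\mathcal{M}(\vec f)$ in the scalar proof is bounded by the same expression here.

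Second, I would invoke the Fefferman--Stein inequality $\|g\|_{L^p(\mu)}\le C\|M^\#_\delta g\|_{L^p(\mu)}$ valid for $\mu\in A_\infty$ (here $\mu=\nu_{\vec\omega}$, which lies in $A_\infty$ because $\vec\omega\in A_{\vec p}$ implies $\nu_{\vec\omega}\in A_{mp}\subset A_\infty$), applied to the scalar function $g=\|T(\vec f)\|_B$, together with the maximal function bound of Theorem \ref{thm-loptt-1}, namely $\|\mathcal{M}_A(\vec f)\|_{L^p(\nu_{\vec\omega})}\le C\prod_i\|f_i\|_{L^{p_i}(\omega_i)}$ with $A=\{1,\dots,m\}$. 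A standard density/truncation argument (first assume $\|T(\vec f)\|_B\in L^{p_0}(\nu_{\vec\omega})$ for some $p_0$ by working with suitable approximations, then remove the assumption) completes the chain
\begin{equation*}
\|T(\vec f)\|_{L^p(\nu_{\vec\omega}),B}=\big\|\,\|T(\vec f)\|_B\,\big\|_{L^p(\nu_{\vec\omega})}\le C\big\|M^\#_\delta(\|T(\vec f)\|_B)\big\|_{L^p(\nu_{\vec\omega})}\le C\|\mathcal{M}_A(\vec f)\|_{L^p(\nu_{\vec\omega})}\le C\prod_{i=1}^m\|f_i\|_{L^{p_i}(\omega_i)}.
\end{equation*}

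The only genuine point requiring care — and hence what I expect to be the main obstacle — is the vector-valued weak endpoint estimate $T:L^1\times\cdots\times L^1\to L^{1/m,\infty}_B$, since this is the one ingredient not already quoted in the excerpt and the one place where one must check that the Calder\'on-Zygmund decomposition interacts correctly with the $B$-valued kernel. This is, however, routine: the good part is handled by the $L^2\times\cdots\times L^2$ (or the assumed "bounded at some point") estimate plus interpolation, and the bad part is handled exactly as in Grafakos--Torres using only $\|K\|_B$ estimates and the triangle inequality in $B$; no geometry of $B$ (no UMD, no type/cotype) is needed because $B$ enters only through its norm. Everything else is a mechanical substitution, which is why the statement is legitimately offered with the proof omitted; I would include at most a remark pointing to \cite{BeCP}, \cite{RuRT}, \cite{Ha1}, \cite{Ha2} for the analogous vector-valued bookkeeping.
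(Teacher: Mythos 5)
Your proposal is correct and coincides with what the paper intends: the paper omits the proof of Theorem \ref{thm-bounds-vec}, stating only that it is a step-by-step copy of the scalar argument of \cite{LOPTT} with $\|\cdot\|_B$ inserted (pointing to \cite{BeCP}, \cite{RuRT}, \cite{Ha1}, \cite{Ha2}), and your outline --- the sharp maximal estimate $M^\#_\delta(\|T(\vec f)\|_B)\le C\mathcal{M}(\vec f)$, the Fefferman--Stein inequality with $\nu_{\vec\omega}\in A_{mp}\subset A_\infty$, Theorem \ref{thm-loptt-1}, and the vector-valued weak $L^1\times\cdots\times L^1\to L^{1/m,\infty}$ endpoint via the Calder\'on--Zygmund decomposition --- is exactly that transfer, with the right identification of the endpoint estimate as the only step requiring a genuine (but routine) check.
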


Now we may define the generalized commutators of the vector-valued
multilinear Calder\'{o}n-Zygmund operators. Once again,
this generalized commutator also satisfies the natural weighted strong and weighted endpoint
boundedness, and we omit the proof.
\begin{thm}\label{thm-bounds-vec-com}
Let $\vec{\omega}\in A_{\vec{p}}$ with
$\frac{1}{p}=\sum_{j=1}^m\frac{1}{p_j}$ with $1<p_j<\infty$,
 $j=1,\dots,m$. Then there exists
 constant $C$ such that for any $f_j\in L^p(\omega_j)$, it holds that
\begin{equation*}
\|T_{\vec{b},S}\vec{f})\|_{L^p(v_{\vec{\omega}}),B}\leq
C\prod_{(i,j)\in S}\|b_i\|_{BMO}
\prod_{j=1}^m\|f_j\|_{L^{p_j}(\omega_j)}.
\end{equation*}
\end{thm}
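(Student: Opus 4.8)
The plan is to reduce the weighted bound for the generalized commutator $T_{\vec b,S}$ to the weighted bound for $T$ itself (Theorem \ref{thm-bounds-vec}) by the standard Cauchy-integral / conjugation trick of Coifman--Rochberg--Weiss, carried out one symbol $b_i$ at a time and keeping the Banach-space norm $\|\cdot\|_B$ throughout. First I would observe that, since $S$ is finite and for each $(i,j)\in S$ we have the algebraic identity $b_i(x)-b_i(y_j)=(b_i(x)-\lambda)-(b_i(y_j)-\lambda)$, one can expand $\prod_{(i,j)\in S}(b_i(x)-b_i(y_j))$ and organize the resulting commutator as an iterated commutator in the distinct symbols $\{b_i: i\in\bigcup_j S_j\}$; it therefore suffices to treat the case of a single pair, i.e. to bound the first-order commutator, and then iterate, the number of iterations being $|S|$, which is fixed. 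For a single symbol $b$, write $z=a+i\gamma\in\C$ and set $T_z(\vec f)=e^{zb}\,T(e^{-zb}f_{j_0},f_{\text{rest}})$ where the exponential is inserted in the slot dictated by the pair $(i,j)$; then the commutator is recovered as $\frac{d}{dz}\big|_{z=0}$, hence by the Cauchy integral formula as $\frac{1}{2\pi i}\int_{|z|=\varepsilon} \frac{T_z(\vec f)}{z^2}\,dz$, and one estimates $\|T_z(\vec f)\|_{L^p_B(\nu_{\vec\omega})}$ uniformly for small $|z|$.

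The key step is then to check that $e^{\pm\varepsilon b}$ behaves like a bounded multiplier on the relevant weighted spaces when $b\in BMO$: by the John--Nirenberg inequality, $e^{-\varepsilon b}\in A_2$ (indeed in $A_p$ for all $p$) for $\varepsilon$ small, and more to the point, multiplying a weight $\omega_j\in A_{p_j}$ by $e^{\pm \varepsilon p_j\,\mathrm{Re}\,b}$ (or the analogous factor on the target weight $\nu_{\vec\omega}$) produces again an $A_{p_j}$ (resp.\ $A_{\vec p}$) weight, with constants uniformly controlled for $|\varepsilon|$ below a threshold depending only on $\|b\|_{BMO}$ and the dimension. This is exactly the point where one uses that $\vec\omega\in A_{\vec p}$ is an open-ended condition. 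Consequently Theorem \ref{thm-bounds-vec} applies to $T$ with the perturbed weights, uniformly, and gives $\|T_z(\vec f)\|_{L^p_B(\nu_{\vec\omega})}\le C\prod_j\|f_j\|_{L^{p_j}(\omega_j)}$ with $C$ independent of $z$ on the circle $|z|=\varepsilon$; plugging this into the Cauchy-integral representation and using Minkowski's integral inequality in $L^p_B(\nu_{\vec\omega})$ yields the first-order estimate with the factor $\|b\|_{BMO}$ (the $\varepsilon^{-1}\sim\|b\|_{BMO}$ coming out of $1/z^2$ on $|z|=\varepsilon$).

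To finish, I would iterate: having bounded the commutator in one symbol as an operator of the same type (a $B$-valued multilinear operator whose kernel still satisfies Calder\'on--Zygmund-type estimates, or more simply by conjugating again at the level of the weighted estimate), repeat the Cauchy-integral argument for the next symbol appearing in $S$, picking up one factor $\|b_i\|_{BMO}$ each time, until all pairs in $S$ are exhausted; since $|S|$ is finite the constant remains finite and one arrives at $\prod_{(i,j)\in S}\|b_i\|_{BMO}$. The main obstacle is the uniform-in-$z$ weighted bound for the conjugated operator $T_z$: one must verify that the perturbation $\omega\mapsto \omega\, e^{\pm\varepsilon\,\mathrm{Re}\,b}$ keeps all the weights in their respective Muckenhoupt/multilinear classes with uniformly bounded characteristics, and that the constant in Theorem \ref{thm-bounds-vec} depends on $\vec\omega$ only through $[\vec\omega]_{A_{\vec p}}$ so that this uniformity can be exploited; the Banach-valued nature adds nothing essential here since every estimate is applied to the scalar function $\|K(\cdots)\|_B$ and $\|T(\cdot)\|_B$. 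Everything else is the routine Coifman--Rochberg--Weiss machinery transcribed to the multilinear vector-valued setting, which is why the authors say the proof may be omitted.
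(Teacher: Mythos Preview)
Your strategy is sound and is one of the standard routes to this kind of result: the paper itself omits the proof entirely, saying only that it is a verbatim copy of the scalar-valued argument in \cite{XuY} with $\|\cdot\|_B$ inserted, so there is no ``paper's proof'' to compare line-by-line. Reducing to Theorem~\ref{thm-bounds-vec} via the Coifman--Rochberg--Weiss conjugation/Cauchy-integral method is a perfectly legitimate alternative and is, in fact, the approach used in \cite{LOPTT} for the first-order multilinear commutators.

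Two technical points deserve correction. First, you write that ``multiplying a weight $\omega_j\in A_{p_j}$ by $e^{\pm\varepsilon p_j\operatorname{Re} b}$ produces again an $A_{p_j}$ weight.'' But in the multilinear class $A_{\vec p}$ the individual components $\omega_j$ are \emph{not} in $A_{p_j}$ in general (only $\omega_j^{1-p_j'}\in A_{mp_j'}$ and $\nu_{\vec\omega}\in A_{mp}$). The correct statement, which is what actually makes the argument run, is that the \emph{vector} $(\omega_1 e^{\varepsilon p_1 g_1},\dots,\omega_m e^{\varepsilon p_m g_m})$ remains in $A_{\vec p}$ with uniformly bounded characteristic whenever each $g_l$ is a finite real linear combination of the $b_i$'s and $\varepsilon$ is small relative to $\max_i\|b_i\|_{BMO}$; this openness/perturbation lemma is in \cite{LOPTT}, and you should invoke it at the level of the vector weight, not the components.

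Second, the parenthetical ``an operator of the same type (a $B$-valued multilinear operator whose kernel still satisfies Calder\'on--Zygmund-type estimates)'' is false for $b\in BMO$: the kernel $(b(x)-b(y_j))K(x,\vec y)$ does not obey the standard size or smoothness bounds because $b$ can be unbounded. Thus one cannot iterate by regarding each successive commutator as a new CZ operator. Your fallback, ``conjugating again at the level of the weighted estimate,'' is the right fix: one performs a single \emph{multivariable} Cauchy integral over $\prod_{(i,j)\in S}\{|z_{i,j}|=\varepsilon_{i,j}\}$ against $\prod z_{i,j}^{-2}$, applies Theorem~\ref{thm-bounds-vec} once with the simultaneously perturbed $A_{\vec p}$ weights, and reads off the product $\prod_{(i,j)\in S}\|b_i\|_{BMO}$ from the radii $\varepsilon_{i,j}\sim\|b_i\|_{BMO}^{-1}$. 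With these two adjustments your outline goes through.
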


\begin{rem}
The boundedness in the above two theorems can both be extended to the endpoint cases, just as they hold for scalar-valued multilinear operators, and the proofs will be mostly copies of scalar-valued ones. We omit them.
\end{rem}
For functions $f_1(t)$ defined on $\R^+$ and functions $f_2(t,z)$ defined on $\R^{n+1}_+$, define their norm respectively by
\begin{align*}
\|f_1\|_{H_1}=\big(\int_{0}^{\infty}|f_1(t)|^2\frac{dt}{t}\big)^{\frac12},
\quad\|f_2\|_{H_2}=\big(\iint_{\mathbb{R}^{n+1}_+}\big(\frac{t}{|x-z|+t}\big)^{n\lambda}
|f_2(t,z)|^2\frac{dzdt}{t^{n+1}}\big)^{\frac1{2}}.
\end{align*}

Lemma \ref{lem-XuY2-1} and Lemma \ref{lem-XuY2-2} are thus leading to the following facts:
\label{thm-XuY2}
A multilinear Marcinkiewicz operator $\mu_{\Omega}$ or a multilinear Littlewood-Paley operator $g$ is a $H_1$-valued multilinear Calder\'{o}n-Zygmund operator. And a multilinear square function $g_{\lambda}^*$ is a $H_2$-valued multilinear Calder\'{o}n-Zygmund operator.

Combining Theorem \ref{thm-XuY} with the above facts, we have

\begin{cor}
Assume $\mathcal{T}$ be any one of the following three multilinear operators $\mu_{\Omega}$, $g$ or $g_{\lambda}^*$. Let $\mathcal{T}_{\vec b,S}$ be its generalized commutator defined similarly as in \ref{GC}, $1\leq p_j<\infty$,$j=1,\dots, m$,$\frac{1}{p}=\sum_{j=1}^m\frac{1}{p_j}$,ÇÒ$\omega_i\in A_{p_i}$.
Then there exists constant $C>0$, such that for any $b_i\in BMO$,$f_j\in C^{\infty}_c$, it holds that
\begin{equation*}
\|\mathcal{T}_{\vec{b},S}\vec{f})\|_{L^p(v_{\vec{\omega}})}\leq
C\prod_{(i,j)\in S}\|b_i\|_{BMO}
\prod_{j=1}^m\|f_j\|_{L^{p_j} (\omega_j)}.
\end{equation*}
\end{cor}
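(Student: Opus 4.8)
The plan is to recognize $\mathcal{T}_{\vec b,S}$ as the pointwise Banach-space norm of a $B$-valued generalized commutator and then invoke the vector-valued commutator bound of Theorem~\ref{thm-bounds-vec-com}. Set $B=H_1$ when $\mathcal{T}$ is $\mu_\Omega$ or $g$, and $B=H_2$ when $\mathcal{T}=g_\lambda^*$. As recorded in the facts preceding the statement (which rest on Lemma~\ref{lem-XuY2-1} and Lemma~\ref{lem-XuY2-2}), the family $\{K_t\}_{t>0}$, realized as a single $B$-valued kernel $\mathcal{K}(x,\vec y)$ --- namely $\mathcal{K}(x,\vec y)=\big(t\mapsto K_t(x,\vec y)\big)$ for $B=H_1$, and $\mathcal{K}(x,\vec y)=\big((u,t)\mapsto K_t(x-u,\vec y)\big)$ for $B=H_2$ --- satisfies the size and regularity conditions (i)--(iii) from Section~3 for a $B$-valued $m$-linear Calder\'on--Zygmund kernel. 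Together with the standing hypothesis that the square operator is bounded at some point, this makes the associated $B$-valued operator $\widetilde T$ a $B$-valued $m$-linear Calder\'on--Zygmund operator, so Theorem~\ref{thm-bounds-vec} and, for the commutator, Theorem~\ref{thm-bounds-vec-com} apply to it.

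First I would dispose of the hypotheses: since $\omega_i\in A_{p_i}$ for every $i$, the tuple $\vec\omega$ satisfies $A_{\vec p}$ (take $A=\{1,\dots,m\}$ in the Remark following the definition of $A_{\vec p,A}$), and $f_j\in C_c^\infty\subset L^{p_j}(\omega_j)$ because each $\omega_j$ is locally integrable, so the right-hand side of Theorem~\ref{thm-bounds-vec-com} is finite. Next, for $x\notin\bigcap_{j=1}^m\supp f_j$, the scalar factors $b_i(x)-b_i(y_j)$ with $(i,j)\in S$ depend neither on the $H_1$-variable $t$ nor on the $H_2$-variables $(u,t)$, hence they pass through the inner $B$-integration; consequently $\widetilde T_{\vec b,S}(\vec f)(x)$, as an element of $B$, is precisely the function obtained by inserting $\prod_{(i,j)\in S}(b_i(x)-b_i(y_j))$ into the defining integral of $\mathcal{K}$, and taking its $B$-norm reproduces exactly $\mathcal{T}_{\vec b,S}(\vec f)(x)$. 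Thus $\mathcal{T}_{\vec b,S}(\vec f)(x)=\|\widetilde T_{\vec b,S}(\vec f)(x)\|_B$ for a.e.\ $x$, so
\[
\|\mathcal{T}_{\vec b,S}(\vec f)\|_{L^p(\nu_{\vec\omega})}=\|\widetilde T_{\vec b,S}(\vec f)\|_{L^p_B(\nu_{\vec\omega})}\le C\prod_{(i,j)\in S}\|b_i\|_{BMO}\prod_{j=1}^m\|f_j\|_{L^{p_j}(\omega_j)}
\]
by Theorem~\ref{thm-bounds-vec-com} with this choice of $B$, which is the asserted estimate.

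The one step requiring genuine care --- though it introduces no new estimate --- is the passage from the pointwise identity $\mathcal{T}_{\vec b,S}(\vec f)(x)=\|\widetilde T_{\vec b,S}(\vec f)(x)\|_B$ to the norm identity between the two sides. One must check that $x\mapsto\widetilde T_{\vec b,S}(\vec f)(x)$ is a strongly measurable $B$-valued function whose values agree a.e.\ with the Bochner integral written above; since $H_1$ and $H_2$ are separable Hilbert spaces, strong measurability follows from weak measurability, obtained by pairing against a countable dense set and applying Fubini, while away from $\bigcap_{j=1}^m\supp f_j$ the $B$-valued integral converges absolutely by the kernel size bound in Lemma~\ref{lem-XuY2-1}/\ref{lem-XuY2-2} and the boundedness of the $b_i$ on compacta, and on the exceptional null set the identification is the one built into the definition of the $B$-valued operator, exactly as in the scalar theory. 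For $g_\lambda^*$ there is the additional purely notational bookkeeping of absorbing the $x$-dependent weight $\big(t/(|x-z|+t)\big)^{n\lambda}$ into a fixed space $B=H_2$ via the substitution $z=x-u$, after which the estimates of Lemma~\ref{lem-XuY2-2} are exactly the required $B$-valued conditions (i)--(iii); this is the only point where the three cases are not literally identical, and I expect it to be the main (mildly technical) obstacle.
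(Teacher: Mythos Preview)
Your proposal is correct and follows essentially the same route as the paper: the paper's proof of this corollary is the single sentence ``Combining Theorem~\ref{thm-XuY} with the above facts, we have\ldots'', where the ``above facts'' are precisely that $\mu_\Omega$ and $g$ are $H_1$-valued and $g_\lambda^*$ is $H_2$-valued multilinear Calder\'on--Zygmund operators (via Lemmas~\ref{lem-XuY2-1} and~\ref{lem-XuY2-2}), and the theorem invoked is the vector-valued commutator bound (Theorem~\ref{thm-bounds-vec-com}; the paper's citation of Theorem~\ref{thm-XuY} appears to be a typographical slip). Your write-up simply fills in the details---the identification $\mathcal T_{\vec b,S}(\vec f)(x)=\|\widetilde T_{\vec b,S}(\vec f)(x)\|_B$, the verification that $\omega_i\in A_{p_i}$ implies $\vec\omega\in A_{\vec p}$, the measurability check, and the change of variables $z=x-u$ for $g_\lambda^*$---that the paper leaves implicit.
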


\begin{rem}
The above result is new not only because it considered the generalized commutators,
or the weighted case, but also for the facts that, there is no literature about compactness of commutators of multilinear square operators. The linear case is indeed known \cite{ChD}.
\end{rem}

\begin{rem}
We can obtain the weighted endpoint boundedness for the generalized commutators of multilinear square operators, just as Remark 1 implied.
\end{rem}
\section{proof of Theorem \ref{thm-comp-lpomega}}
We need the following Lemma.
\begin{lem}\label{lem:compactset}
Let $1< p<\infty$. Let $w$ be a weight on $\Rn$ such that
$w^{-p'/p}=w^{-1/(p-1)}$ is also a weight on $\Rn$. 
Let $G$ be a subset of $L^p(w)$.
Then $G$ is relatively compact in $L^p(w)$
if it satisfies the following three conditions:
\begin{enumerate}[{\rm(i)}]
\item 
There exists $K>0$ such that $\|f\|_{L^p(w)}\le K$ for all $f\in G$;
\item 
For any $\varepsilon>0$ there exists $A>0$ such that
\begin{equation*}
\biggl(\int_{|x|>A}|f(x)|^pw(x)dx\biggr)^{1/p}<\varepsilon\ \text{ for any }
f\in G;
\end{equation*}
\item 
For any $\varepsilon>0$ there exists $\delta>0$ such that
\begin{equation*}
\biggl(\int_{\Rn}|f(x+u)-f(x)|^pw(x)dx\biggr)^{1/p}<\varepsilon\
\text{ for any }f\in G \text{ and }|u|<\delta.
\end{equation*}
\end{enumerate}
\end{lem}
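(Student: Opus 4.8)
The plan is to reduce the weighted statement to the classical (unweighted) Fréchet–Kolmogorov theorem on $L^p(\Rn)$ by means of the substitution $g=fw^{1/p}$, and to exploit the hypothesis $w^{-1/(p-1)}\in A_\infty$-type weight (more precisely, that $w^{-p'/p}$ is locally integrable) to transfer the three conditions back and forth. First I would observe that the map $f\mapsto fw^{1/p}$ is an isometric bijection from $L^p(w)$ onto $L^p(\Rn)$, so $G$ is relatively compact in $L^p(w)$ if and only if $\widetilde G:=\{fw^{1/p}:f\in G\}$ is relatively compact in $L^p(\Rn)$. Condition (i) is literally the statement $\sup_{g\in\widetilde G}\|g\|_{L^p}\le K$, and condition (ii) translates to the uniform smallness of $\int_{|x|>A}|g|^p\,dx$, i.e. the ``no mass escaping to infinity'' hypothesis of the classical theorem. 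So the only real work is condition (iii): I must show that equicontinuity of translations for $G$ in $L^p(w)$ is equivalent to equicontinuity of translations for $\widetilde G$ in $L^p(\Rn)$.

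The forward direction (necessity) is the easy half: if $G$ is relatively compact in $L^p(w)$, then $\widetilde G$ is relatively compact in $L^p(\Rn)$, hence by the classical Fréchet–Kolmogorov theorem it satisfies the unweighted (i)--(iii), and one reads off (i)--(ii) for $G$ immediately; for (iii) one writes $f(x+u)-f(x) = w(x)^{-1/p}\big(g(x+u)w(x)^{1/p}w(x+u)^{-1/p}-g(x)\big)$ — this is slightly awkward because the weight is evaluated at two different points, so instead I would argue directly: relative compactness in $L^p(w)$ gives a finite $\varepsilon$-net $\{f_1,\dots,f_N\}$, each $f_i\in L^p(w)$ satisfies $\|f_i(\cdot+u)-f_i(\cdot)\|_{L^p(w)}\to0$ as $|u|\to0$ by density of $C_c$ in $L^p(w)$ (valid since $w$ and $w^{-p'/p}$ are both locally integrable), and the triangle inequality propagates this uniformly over $G$ after controlling the translate of the weight — here one uses that $w\in A_\infty$-type to get $\|\tau_u h\|_{L^p(w)}\le C\|h\|_{L^p(w)}$ uniformly for small $u$, which follows from $w^{-p'/p}$ being a weight via a Hölder argument on small balls. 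Actually the cleanest route for necessity is just to cite the classical unweighted result applied to $\widetilde G$ and then convert, handling (iii) by the net argument above.

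The reverse direction (sufficiency) is the substantive one, and the main obstacle. Given (i)--(iii) for $G$ in $L^p(w)$, I want (i)--(iii) for $\widetilde G$ in $L^p(\Rn)$ so that the classical theorem applies. Conditions (i) and (ii) transfer trivially. For (iii) the difficulty is exactly that $g(x+u)-g(x) = f(x+u)w(x+u)^{1/p}-f(x)w(x)^{1/p}$ mixes the translation of $f$ with the translation of $w^{1/p}$. I would split
\[
g(x+u)-g(x) = \big(f(x+u)-f(x)\big)w(x+u)^{1/p} + f(x)\big(w(x+u)^{1/p}-w(x)^{1/p}\big).
\]
The first term's $L^p(\Rn)$ norm is $\|f(\cdot+u)-f(\cdot)\|_{L^p(\tau_{-u}w)}$, which I control by (iii) for $G$ together with a comparison $\tau_{-u}w\le C w$ on the relevant scale — this comparison is where the hypothesis ``$w^{-1/(p-1)}$ is a weight for some $p_0>1$'' enters: it forces $w$ into $A_\infty$ (indeed into some $A_{p_0}$ after adjusting), and $A_\infty$ weights have the doubling-type property that translates by small vectors are pointwise-comparable in an integrated sense. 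The second term is handled via the (integrated) continuity of translations of the fixed function $w^{1/p}$ against the uniformly-$L^p(w^{-p'/p})$-bounded family $\{f\}$ — but $\{f\}$ need not be bounded in that space, so instead I truncate: approximate each $f$ by a bounded, compactly supported function using (i)--(ii), for which the second term is easily small by continuity of translation of $w^{1/p}\in L^1_{loc}$, and absorb the error using (i). The key technical lemma to isolate and prove first is therefore: \emph{if $w$ and $w^{-1/(p-1)}$ are both weights, then for every $\varepsilon>0$ there is $\delta>0$ with $\int_{\Rn}|h(x+u)-h(x)|^p w(x)\,dx \le (1+\varepsilon)\int_{\Rn}|h(x+u)-h(x)|^p w(x+u)\,dx + (\text{controllable remainder})$ for $|u|<\delta$}, i.e. a quantitative one-sided comparison of $w$ and its small translates on the support of differences; proving this comparison (essentially that $w^{-p'/p}\in L^1_{loc}$ yields local comparability of $w$ and $\tau_u w$ after integrating) is the heart of the matter, and once it is in place the rest is bookkeeping with the triangle inequality and the classical Fréchet–Kolmogorov theorem.
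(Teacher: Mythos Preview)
Your reduction via the isometry $f\mapsto fw^{1/p}$ is natural, and the paper even records (in the remark immediately following the lemma) that this isometry converts relative compactness into the unweighted conditions (i), (ii), and
\[
\textup{(iii$'$)}\quad \Bigl(\int_{\Rn}\bigl|f(x+u)w(x+u)^{1/p}-f(x)w(x)^{1/p}\bigr|^p\,dx\Bigr)^{1/p}<\varepsilon.
\]
So the entire content of the lemma is precisely that (iii) can replace (iii$'$), and your plan amounts to proving (iii)$\Leftrightarrow$(iii$'$) directly. The gap is in the mechanism you invoke for that equivalence.

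You assert that ``$w^{-1/(p-1)}$ is a weight'' forces $w\in A_\infty$, and then lean on $A_\infty$ to compare $w$ with its translates $\tau_u w$. This implication is false: in the paper's terminology a \emph{weight} is merely a nonnegative locally integrable function, with no uniformity over balls. It is easy to build $w$ on $\R$ with both $w$ and $w^{-1}$ locally integrable but $[w]_{A_q}=\infty$ for every $q$ (e.g.\ let $w$ equal $1$ except on a sequence of shrinking intervals where it equals increasingly large constants). Without $A_\infty$ there is no reason for $\|\tau_u h\|_{L^p(w)}\le C\|h\|_{L^p(w)}$, and both halves of your argument---the first-term bound $\|f(\cdot+u)-f(\cdot)\|_{L^p(\tau_{-u}w)}\lesssim\|f(\cdot+u)-f(\cdot)\|_{L^p(w)}$ in sufficiency, and the bound on $\|(f-f_i)(\cdot+u)\|_{L^p(w)}$ in your net argument for necessity---collapse. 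Your proposed ``key technical lemma'' is essentially a restatement of this comparison and inherits the same problem: local integrability of $w^{-p'/p}$ gives no integrated comparability of $w$ and $\tau_u w$.

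The paper sidesteps the translate-of-the-weight issue entirely. For sufficiency it works \emph{inside} $L^p(w)$: it mollifies via $f_{B(x,t)}=\fint_{B(x,t)}f$, uses Minkowski plus (iii) to get $\|f-f_{B(\cdot,t)}\|_{L^p(w)}<\varepsilon$ uniformly in $f\in G$ for small $t$, then uses H\"older on the \emph{fixed} compact ball $\overline{B(0,A)}$ (this is the only place $w^{-p'/p}\in L^1_{\mathrm{loc}}$ enters, and only on that one ball) to show $\{x\mapsto f_{B(x,t)}:f\in G\}$ is equibounded and equicontinuous there, whence Ascoli--Arzel\`a yields a finite net and total boundedness follows. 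The point is that averaging absorbs the translation into the integration variable, so one never has to move the weight.
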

\begin{rem}
Since $G'=\{fw^{1/p}; f\in G\}$ is a subset of $L^p(\Rn)$,
from the Fr\'echet-Kolmogorov theorem (cf. \cite{Yo}), it follows that $G$
 is relatively
compact in $L^p(w)$ if and only if it satisfies the following three conditions:
\begin{enumerate}[(i)]
\item 
There exists $K>0$ such that $\|f\|_{L^p(w)}\le K$ for all $f\in G$;
\item 
For any $\varepsilon>0$, there exists $A>0$ such that
\begin{equation*}
\biggl(\int_{|x|>A}|f(x)|^pw(x)dx\biggr)^{1/p}<\varepsilon, \ \text{ for any }
f\in G;
\end{equation*}
\item[{$\rm(iii')$}]
For any $\varepsilon>0$, there exists $\delta>0$ such that
\begin{equation*}
\biggl(\int_{\Rn}|f(x+u)w(x+u)^{1/p}-f(x)w(x)^{1/p}|^pdx\biggr)^{1/p}
<\varepsilon,\ \text{ for any }f\in G\text{ and }|u|<\delta.
\end{equation*}
\end{enumerate}
\end{rem}
\begin{rem}
If $w\in A_p(\Rn)$ $1\le p<\infty)$, then $w$ satisfies the conditions in Lemma
\ref{lem:compactset}.
\end{rem}

\begin{proof}
For any measurable set $E\subset\Rn$ and a locally integrable function $f$ on
 $\Rn$, we set $f_E=|E|^{-1}\int_E f(x)dx$. Then, for $B=B(x,t)$ we have
\begin{align*}
\|f-f_B\|_{L^p(w)}
&=\biggl(\int_\Rn \Bigl|\frac{1}{|B(x,t)|}\int_{B(x,t)}(f(x)-f(y))dy\Bigr|^p
w(x)dx\biggr)^{\frac1p}
\\
&=\biggl(\int_\Rn \Bigl|\frac{1}{|B(0,t)|}\int_{B(0,t)}(f(x)-f(y+x))dy\Bigr|^p
w(x)dx\biggr)^{\frac1p},
\end{align*}
and so by Minkowski's inequality, we have 
\begin{equation*}
\|f-f_B\|_{L^p(w)}
\le \frac{1}{|B(0,t)|}\int_{B(0,t)}\biggl(\int_\Rn |(f(x)-f(y+x))|^p
w(x)dx\biggr)^{\frac1p}dy.
\end{equation*}
Thus for any fixed $\varepsilon>0$,
choosing $\delta>0$ in the assumption (iii) in Lemma \ref{lem:compactset},
we see that for $0<t<\delta$, it holds that
\begin{equation}\label{eq:compactset-1}
\|f-f_{B(\,\cdot\,,t)}\|_{L^p(w)}<\varepsilon \ \text{ for any }f\in G.
\end{equation}
We fix such a $t>0$.
We next estimate $f_B$ and $f_B-f_{B'}$.
\begin{equation}\label{eq:compactset-2}
|f_{B(x,t)}|\le
\biggl(\frac{1}{|B(x,t)|}\int_{B(x,t)}|f(y)|^pw(y)dy\biggr)^{\frac1p}
\biggl(\frac{1}{|B(x,t)|}\int_{B(x,t)}w(y)^{-\frac{p'}{p}}dy
\biggr)^{\frac1{p'}}.
\end{equation}
For $f_B-f_{B'}$, we get
\begin{align}
|f_{B(x,t)}-f_{B(y,t)}|\label{eq:compactset-3}
&=\Bigl|\frac{1}{|B(y,t)|}\int_{B(y,t)}f(u-y+x)du
-\frac{1}{|B(y,t)|}\int_{B(y,t)}f(u)du\Bigr| \notag
\\
&=\Bigl|\frac{1}{|B(y,t)|}\int_{B(y,t)}(f(u-y+x)-f(u))du\Bigr| \notag
\\
&\le \biggl(\frac{1}{|B(y,t)|}\int_{B(y,t)}|f(u-y+x)-f(u)|^pw(u)du
\biggr)^{\frac1p}\\&\quad\times
\biggl(\frac{1}{|B(y,t)|}\int_{B(y,t)}w(u)^{-\frac{p'}{p}}du
\biggr)^{\frac1{p'}}. \notag
\end{align}
Now choose $A>0$ in (ii).
 Since we have assumed that $w^{-p'/p}$ is
a weight on $\Rn$, we see that there exists $c_0>0$ such that
$\int_{B(x,t)}w(u)^{-\frac{p'}{p}}du\le c_0$ for all $|x|\le A$.
Hence, $\{f_{B(x,t)}\}_{f\in G}$ is equi-bounded and equi-continuous on the
closed ball $B(0,A)$. So, by Ascoli-Arzel\'a theorem, it is relatively compact
and so totally bounded in $C(B(0,A))$. Thus, there exist a finite number of
$f_1,f_2,\dots, f_k\in G$ such that
\begin{equation*}
\inf_{j}\sup_{|x|\le A}|f_{B(x,t)}-f_{j,B(x,t)}|<\varepsilon/w(B(0,A))^{1/p}
 \ \text{ for all }f\in G.
\end{equation*}
It follows that for $f\in G$ there exists $1\le j\le k$ such that
\begin{equation}\label{eq:compactset-4}
\sup_{|x|\le A}|f_{B(x,t)}-f_{j,B(x,t)}|<\varepsilon/w(B(0,A))^{1/p}.
\end{equation}
For these $f,f_j$ we have
\begin{align*}
\|f-f_j\|_{L^p(w)}
&\le
\|(f-f_j)\chi_{|x|\le A}\|_{L^p(w)}+\|(f-f_j)\chi_{|x|> A}\|_{L^p(w)}
\\
&\le
\|(f-f_{B(x,t)})\chi_{|x|\le A}\|_{L^p(w)}
+\|(f_{B(x,t)}-f_{j,B(x,t)})\chi_{|x|\le A}\|_{L^p(w)}
\\
&\hspace{0.5cm}+\|(f_{j,B(x,t)}-f_j)\chi_{|x|\le A}\|_{L^p(w)}
+\|f\chi_{|x|>A}\|_{L^p(w)}+\|f_j\chi_{|x|>A}\|_{L^p(w)}.
\end{align*}
Hence by (iii), \eqref{eq:compactset-1} and \eqref{eq:compactset-4} we obtain
\begin{equation*}
\|f-f_j\|_{L^p(w)}\le 5\varepsilon,
\end{equation*}
which means that $G$ is totally bounded and hence relatively compact in
$L^p(w)$.
This completes the proof.
\end{proof}

We now restate Theorem \ref{thm-comp-lpomega} 
 in the following form:
 \begin{lem}\label{lem:compactset-2}
Let $w$ be a weight on $\Rn$. Assume that $w^{-1/(p_0-1)}$ is also a weight
on $\Rn$ for some $p_0>1$. Let $0<p<\infty$ and $F$ be a subset in $L^p(w)$.
Then $F$ is relatvely compact in $L^p(w)$ if the following three
conditions are satisfied:
\begin{enumerate}[{\rm(i)}]
\item 
There exists $K>0$ such that $\|f\|_{L^p(w)}\le K$ for all $f\in F$;
\item 

For any $\varepsilon>0$ there exists $A>0$ such that
\begin{equation*}
\biggl(\int_{|x|>A}|f(x)|^pw(x)dx\biggr)^{1/p}<\varepsilon\ \text{ for any }
f\in F;
\end{equation*}
\item 
For any $\varepsilon>0$ there exists $\delta>0$ such that
\begin{equation*}
\biggl(\int_{\Rn}|f(x+u)-f(x)|^pw(x)dx\biggr)^{1/p}<\varepsilon\
\text{ for any }f\in F \text{ and }|u|<\delta.
\end{equation*}
\end{enumerate}
We note that if $0<p<1$, the metric is defined by $\|\,\cdot\,\|_{L^p(w)}^p$.
\end{lem}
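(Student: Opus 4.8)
The plan is to deduce Lemma~\ref{lem:compactset-2} from Lemma~\ref{lem:compactset}, which already covers $1\le p<\infty$, by conjugating with the power nonlinearity $f\mapsto|f|^{p/q}\operatorname{sgn}f$, which turns an $L^p(w)$ problem into an $L^q(w)$ problem for a suitably large $q$. Concretely set $q=\max\{p,p_0\}$, so $q\ge p_0>1$. The first point to verify is that $w^{-1/(q-1)}$ is again a weight on $\R^n$: since $1/(q-1)=\theta/(p_0-1)$ with $\theta:=(p_0-1)/(q-1)\in(0,1]$, we have $w^{-1/(q-1)}=\bigl(w^{-1/(p_0-1)}\bigr)^{\theta}$, and on any bounded set $E$ Hölder's inequality gives $\int_E\bigl(w^{-1/(p_0-1)}\bigr)^{\theta}\le|E|^{1-\theta}\bigl(\int_Ew^{-1/(p_0-1)}\bigr)^{\theta}<\infty$. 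Thus Lemma~\ref{lem:compactset} is available for the exponent $q$, and since $q>1$ the $p=1$ proviso there never intervenes. When $p\ge p_0$ one has $q=p$ and Lemma~\ref{lem:compactset-2} is literally Lemma~\ref{lem:compactset}, so from now on I assume $p<p_0$, i.e. $q>p$.

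\textbf{The conjugating map.} Put $\Phi(f)=|f|^{p/q}\operatorname{sgn}f$ and $\Psi(g)=|g|^{q/p}\operatorname{sgn}g$; then $\Psi\circ\Phi=\mathrm{id}$, $\Phi\circ\Psi=\mathrm{id}$, and $\int_{\R^n}|\Phi(f)|^qw=\int_{\R^n}|f|^pw$, so $\Phi$ is a bijection of $L^p(w)$ onto $L^q(w)$. Everything rests on the two elementary inequalities
\begin{gather*}
\bigl||a|^{\alpha}\operatorname{sgn}a-|b|^{\alpha}\operatorname{sgn}b\bigr|\le 2|a-b|^{\alpha}\qquad(0<\alpha\le1),\\
\bigl||a|^{\beta}\operatorname{sgn}a-|b|^{\beta}\operatorname{sgn}b\bigr|\le \beta\max\{|a|,|b|\}^{\beta-1}|a-b|\qquad(\beta\ge1),
\end{gather*}
the first from subadditivity of $t\mapsto t^{\alpha}$, the second from the mean value theorem applied to $t\mapsto|t|^{\beta}\operatorname{sgn}t$. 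For a family $F\subset L^p(w)$, conditions (i) and (ii) of Lemma~\ref{lem:compactset-2} for $F$ are equivalent to the corresponding conditions for $\Phi(F)\subset L^q(w)$, because $\|\Phi(f)\|_{L^q(w)}^q=\|f\|_{L^p(w)}^p$ and $|\Phi(f)(x)|^qw(x)=|f(x)|^pw(x)$ pointwise.

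\textbf{Transferring condition (iii).} This is the only real work. For $F\Rightarrow\Phi(F)$, the first inequality with $\alpha=p/q$ gives $|\Phi(f)(x+u)-\Phi(f)(x)|^q\le 2^q|f(x+u)-f(x)|^p$, hence $\int|\Phi(f)(\cdot+u)-\Phi(f)|^qw\le 2^q\int|f(\cdot+u)-f|^pw$, and (iii) for $F$ forces (iii) for $\Phi(F)$. The reverse direction is the main obstacle. Writing $g=\Phi(f)$, $\beta=q/p$, and using $p(\beta-1)=q-p$, the second inequality gives
\begin{equation*}
|f(x+u)-f(x)|^p\le (q/p)^p\,\max\{|g(x+u)|,|g(x)|\}^{q-p}\,|g(x+u)-g(x)|^p,
\end{equation*}
and Hölder's inequality with conjugate exponents $q/(q-p)$ and $q/p$ yields
\begin{equation*}
\int_{\R^n}|f(\cdot+u)-f|^pw\le (q/p)^p\Bigl(\int_{\R^n}\max\{|g(\cdot+u)|,|g|\}^{q}w\Bigr)^{\frac{q-p}{q}}\Bigl(\int_{\R^n}|g(\cdot+u)-g|^{q}w\Bigr)^{\frac{p}{q}}.
\end{equation*}
The second factor tends to $0$ with $|u|$, uniformly over $\Phi(F)$, by (iii) for $\Phi(F)$; and the first factor is bounded uniformly over $\Phi(F)$ and small $|u|$, since $\int\max\{|g(\cdot+u)|,|g|\}^qw\le\int|g(\cdot+u)|^qw+\|g\|_{L^q(w)}^q$ while $\int|g(\cdot+u)|^qw\le 2^q\bigl(\int|g(\cdot+u)-g|^qw+\|g\|_{L^q(w)}^q\bigr)$, both controlled by (i) and (iii) for $\Phi(F)$. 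Hence (iii) for $\Phi(F)$ implies (iii) for $F$.

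\textbf{Conclusion.} The same two inequalities show $\Phi$ and $\Psi$ carry relatively compact sets to relatively compact sets: if $f_n\to f$ in $L^p(w)$ then $\int|\Phi(f_n)-\Phi(f)|^qw\le 2^q\int|f_n-f|^pw\to0$, and if $g_n\to g$ in $L^q(w)$ then the Hölder estimate above, with $g(\cdot+u),g$ replaced by $g_n,g$, gives $\int|\Psi(g_n)-\Psi(g)|^pw\to0$. Therefore $F$ satisfies (i)--(iii) $\iff$ $\Phi(F)$ satisfies the hypotheses of Lemma~\ref{lem:compactset} with exponent $q$ $\iff$ $\Phi(F)$ is relatively compact in $L^q(w)$ $\iff$ $F=\Psi(\Phi(F))$ is relatively compact in $L^p(w)$, which is precisely Lemma~\ref{lem:compactset-2}, and hence Theorem~\ref{thm-comp-lpomega}. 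I expect the only delicate point to be the uniform bound on $\int\max\{|g(\cdot+u)|,|g|\}^qw$ in the reverse transfer of (iii); everything else is bookkeeping with the two scalar inequalities above.
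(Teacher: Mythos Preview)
Your argument is correct and follows the same core idea as the paper (Tsuji's trick of conjugating by the power map $f\mapsto |f|^{p/p_0}$ to reduce to Lemma~\ref{lem:compactset}), but the execution differs in a few points worth noting. The paper assumes without loss of generality that all $f\in F$ are nonnegative and uses $f\mapsto f^{a}$ with $a=p/p_0$; you keep the sign via $\Phi(f)=|f|^{p/q}\operatorname{sgn}f$, which is cleaner and makes the reduction an actual bijection. For the passage back from $\Phi(F)$ to $F$, the paper works directly with a Cauchy subsequence $\{f_j^a\}$ and splits $\R^n$ according to whether $(f_i+f_j)/|f_i-f_j|\le 1/\varepsilon$, using the pointwise inequality $|s-t|^a\le a^{-1}\bigl((s+t)/|s-t|\bigr)^{1-a}|s^a-t^a|$; you instead use the mean-value bound together with H\"older, and then observe that $\Phi$ and $\Psi$ are continuous, so each preserves relative compactness. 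Your route is slightly more structural and, as a bonus, yields the necessity direction of the ``if and only if'' (since $\Phi$ and $\Psi$ both transfer conditions (i)--(iii) and relative compactness in each direction), which the paper's proof does not address explicitly for $p<p_0$. The uniform control of $\int\max\{|g(\cdot+u)|,|g|\}^{q}w$ that you flag as the delicate point is handled exactly as you indicate, via (i) and (iii) for $\Phi(F)$.
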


Using the idea in Tsuji \cite{Ts}, we may demonstrate Lemma \ref{lem:compactset-2} now.\begin{proof}
If $p\ge p_0$, then $w^{-1/(p_0-1)}\in L^1_{\mathrm{loc}}(\Rn)$ implies 
$w^{-1/(p-1)}\in L^1_{\mathrm{loc}}(\Rn)$, and so 
we get the conclusion by Lemma \ref{lem:compactset}. So, we
assume $p<p_0$ and set $0<a=p/p_0<1$. 
Without loss of generality, we assume
every $f\in F$ is nonnegative function. By an elementary calculation
(see \cite{Ts}) it holds that
\begin{equation}\label{eq:compactset-5}
|s^a-t^a|\le |s-t|^a\ \text{ for }s,t>0,
\end{equation}
and
\begin{equation}\label{eq:compactset-6}
|s-t|^a\le \frac{1}{a}\Bigl(\frac{s+t}{|s-t|}\Bigr)^{1-a}|s^a-t^a|
\ \text{ for }s,t>0.
\end{equation}

Using \eqref{eq:compactset-5}, we have
\begin{align*}
&\int_{\Rn}|f(x+u)^a-f(x)^a|^{p_0}w(x)dx
\\
&\hspace{1cm} \le
\int_{\Rn}|f(x+u)-f(x)|^{ap_0}w(x)dx
=\int_{\Rn}|f(x+u)-f(x)|^{p}w(x)dx,
\end{align*}
which implies that $F^a:=\{f^a; f\in F\}$ satisfies the condition (iii) in
Lemma \ref{lem:compactset} for $p_0$.
Also, $F^a$ satisfies the conditions (i) and (ii) in
Lemma \ref{lem:compactset} for $p_0$. Hence, by Lemma \ref{lem:compactset},
we see that $F^a$ is relatively compact in $L^{p_0}(w)$.
Now let $\{f_j\}$ be a sequence of functions in $F$. Since $F^a$ is relatively
compact in $L^{p_0}(w)$, there exists a Cauchy subsequence of $\{f_j^a\}$,
which we denote again by $\{f_j^a\}$ for simplicity.
Then for any $\varepsilon>0$, there exists an integer $N$ such that for
$i,j\ge N$, it follows that
\begin{equation}\label{eq:compactset-7}
\int_\Rn |f_i^a(x)-f_j^a(x)|^{p_0}w(x)dx<\varepsilon ^{p_0}.
\end{equation}
Let $E_\varepsilon $ be the set in $\Rn$ such that
\begin{equation*}
\frac{f_i(x)+f_j(x)}{|f_i(x)-f_j(x)|}\le \frac{1}{\varepsilon}.
\end{equation*}
Then, noting $ap_0=p$ and using \eqref{eq:compactset-6},
\eqref{eq:compactset-7}, we have
\begin{align*}
\int_{E_\varepsilon}|f_i(x)-f_j(x)|^{p}w(x)dx
&\le a^{-p_0}\varepsilon^{(a-1)p_0}
\int_{E_\varepsilon}|f_i^a(x)-f_j^a(x)|^{p_0}w(x)dx
\\
&\le a^{-p_0}\varepsilon^{(a-1)p_0}\varepsilon^{p_0}
=a^{-p_0}\varepsilon^{p}.
\end{align*}
On $E_\varepsilon^c$, by \eqref{eq:compactset-6} and (i) we have
\begin{align*}
\int_{E_\varepsilon^c}|f_i(x)-f_j(x)|^{p}w(x)dx
&\le
\int_{E_\varepsilon^c}|\varepsilon(f_i(x)+f_j(x))|^{p}w(x)dx
\\
&\le \varepsilon^p
\Bigl(\int_{E_\varepsilon^c}|f_i(x)|^{p}w(x)dx
+\int_{E_\varepsilon^c}|f_j(x)|^{p}w(x)dx\Bigr)\\&
\le 2K^p\varepsilon^p.
\end{align*}
By the above two estimates, it follows that $\{f_j\}$ is a Cauchy sequence in
$F\subset L^p(w)$. Thus $F$ is relatively compact in $L^p(w)$,
which completes the proof.
\end{proof}
\begin{rem}
If $w\in A_\infty(\Rn)$, $w$ satisfies the assumption in
Lemma \ref{lem:compactset-2} for some $1<p_0<\infty$.
\end{rem}
Finally in this section we present a counter-example for the necessity of the 
condition (iii) in the above results.
Let $1<p_0<p<\infty$ and $1/p<\alpha<p_0/p$. Set
\begin{align*}
w(x)=|x|^{p_0-1} \quad\text{ and }\quad 
f(x)=|x|^{-\alpha}\chi_{\{|x|\le1\}}.
\end{align*}
Then we get $p_0-1-p\alpha>-1$ and $p\alpha>1$, and hence
\begin{align*}
w\in A_p(\R),\quad f\in L^p(w), \quad\text{ but}\quad
f(\cdot+h)\notin L^p(w), \forall h\not=0.
\end{align*}
So, letting $\F=\{f\}$, we see that $F$ is a compact set in $L^p(w)$.
But $\F$ does not safisfy (iii).

\section{Proof of Theorem \ref{thmmain}}
Several lemmas will be needed to prove results for square operators.

\begin{lem}\label{yl2-ch7}
There exists constant $C>0$, such that for any $\delta>0$, $f_j\in C^{\infty}_c$, $j=1,\dots, m$, and any $a\in\{1,\dots, m\}$, it holds that
$$
\int_{\sum_{j=1}^m|x-y_j|\leq \delta}\frac{\prod_{j=1}^m|f_j(y_j)|}{\big(\sum_{j=1}^m|x-y_j|\big)^{nm-1}}d\vec y\leq C\delta\mathcal{M} (\vec f)(x);
$$
$$
\int_{\sum_{j=1}^m|x-y_j|\geq \delta}\frac{\prod_{j=1}^m|f_j(y_j)|}{\big(\sum_{j=1}^m|x-y_j|\big)^{nm+1}}d\vec y\leq \frac C\delta\mathcal{M} (\vec f)(x).
$$
\end{lem}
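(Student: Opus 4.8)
The plan is to split each region of integration into dyadic spherical shells centered at $x$, to observe that on each shell the kernel weight $\big(\sum_{j=1}^m|x-y_j|\big)^{\mp(nm\mp1)}$ is comparable to a constant, and then to bound the integral of $\prod_{j=1}^m|f_j(y_j)|$ over a shell by the mass of $\prod_j|f_j|$ over an enclosing product of balls, which by the definition of $\mathcal{M}$ is at most a dimensional constant times the volume of that product of balls times $\mathcal{M}(\vec f)(x)$.

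Concretely, for the first inequality I would write $\{\sum_{j=1}^m|x-y_j|\le\delta\}=\bigcup_{k\ge0}R_k$ with $R_k=\{2^{-k-1}\delta<\sum_{j=1}^m|x-y_j|\le 2^{-k}\delta\}$, discarding the null set where the sum vanishes. On $R_k$ one has $\big(\sum_{j=1}^m|x-y_j|\big)^{-(nm-1)}\le 2^{nm-1}(2^{-k}\delta)^{-(nm-1)}$, and since $\sum_j|x-y_j|\le 2^{-k}\delta$ forces $y_j\in B(x,2^{-k}\delta)$ for each $j$, we get $R_k\subseteq\prod_{j=1}^m B(x,2^{-k}\delta)$, whence
\[
\int_{R_k}\prod_{j=1}^m|f_j(y_j)|\,d\vec y\le\prod_{j=1}^m\int_{B(x,2^{-k}\delta)}|f_j(y_j)|\,dy_j\le C\,(2^{-k}\delta)^{nm}\,\mathcal{M}(\vec f)(x).
\]
Multiplying the two estimates yields $\int_{R_k}(\cdots)\le C\,2^{-k}\delta\,\mathcal{M}(\vec f)(x)$, and summing the geometric series $\sum_{k\ge0}2^{-k}$ gives the first bound $C\delta\,\mathcal{M}(\vec f)(x)$.

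For the second inequality I would argue symmetrically with the decomposition $\{\sum_{j=1}^m|x-y_j|\ge\delta\}=\bigcup_{k\ge0}\widetilde R_k$, where $\widetilde R_k=\{2^k\delta<\sum_{j=1}^m|x-y_j|\le 2^{k+1}\delta\}$; on $\widetilde R_k$ one has $\big(\sum_j|x-y_j|\big)^{-(nm+1)}\le(2^k\delta)^{-(nm+1)}$ and $\widetilde R_k\subseteq\prod_{j=1}^m B(x,2^{k+1}\delta)$, so the same two-step bound gives $\int_{\widetilde R_k}(\cdots)\le C\,(2^k\delta)^{-1}\mathcal{M}(\vec f)(x)$, and summing over $k\ge0$ produces $C\delta^{-1}\mathcal{M}(\vec f)(x)$.

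The computation is routine; the only things requiring a little care are the geometric containments $R_k\subseteq\prod_jB(x,2^{-k}\delta)$ and $\widetilde R_k\subseteq\prod_jB(x,2^{k+1}\delta)$, the harmless passage from balls to the cubes appearing in the definition of $\mathcal{M}$ (costing only a dimensional factor), and the bookkeeping of exponents: since $nm-1<nm$ each shell contributes a net factor $2^{-k}\delta$, and since $nm+1>nm$ each shell contributes a net factor $2^{-k}\delta^{-1}$ — it is precisely this one-power gap that makes the geometric series converge and supplies the $\delta$ (respectively $\delta^{-1}$) in the statement. I do not expect any genuine obstacle here.
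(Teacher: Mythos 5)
Your argument is correct: the dyadic-shell decomposition, the containment of each shell in a product of balls, the comparison of balls with cubes in the definition of $\mathcal{M}$, and the one-power gap driving the geometric series all check out, and the slips are only cosmetic (the null sets you discard, the stray $x+h$-style typos in the displayed plan). The paper itself omits the proof of this lemma, citing \cite{BeDMT} for the case $m=2$ and noting the idea extends; your proof is exactly that standard annuli argument carried out for general $m$, so it matches the intended (omitted) proof rather than taking a different route.
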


The above estimate when $m=2$ is shown in \cite{BeDMT}. Its idea however can be applied to general $m\in\N$. Once again we omit the proof.

\begin{lem}\label{yl3-ch7}
For any nonempty set $A\subset\{1,\dots, m\}$, any constant $N>0$, 
there exists a constant $C>0$, such that for any $f_j\in C^{\infty}_c$, 
$j=1,\dots, m$, with $\supp f_k\subset B(0,N)$, $k\in A$, and $|x|>2N$
\begin{align*}
\int_{\R^{nm}}\frac{\prod_{j=1}^m|f_j(y_j)|}
{\big(\sum_{j=1}^m|x-y_j|\big)^{nm}}d\vec y
\leq \frac{C}{|x|^{n|A|}}\prod_{j\in A}\|f_j\|_{L^1}\mathcal{M}_{A^c}
 (\vec f)(x).
\end{align*}
\end{lem}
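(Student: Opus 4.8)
The plan is to exploit the support condition $\supp f_k\subset B(0,N)$ for $k\in A$ to extract a factor $|x|^{-|A|}$, paying for it with the $L^1$ norms $\|f_j\|_{L^1}$, $j\in A$, while leaving the remaining variables to be absorbed into the maximal function $\mathcal{M}_{A^c}(\vec f)(x)$. The key geometric observation is that whenever $|x|$ is large compared with $N$ and $y_k\in B(0,N)$ for all $k\in A$, then for each such $k$ one has $|x-y_k|\ge |x|-N\gtrsim |x|$; hence $\sum_{j=1}^m|x-y_j|\ge \sum_{k\in A}|x-y_k|\gtrsim |A|\,|x|$, which gives the crude but sufficient bound
$$
\frac{1}{\big(\sum_{j=1}^m|x-y_j|\big)^{nm}}\le \frac{C}{|x|^{|A|}}\cdot\frac{1}{\big(\sum_{j=1}^m|x-y_j|\big)^{nm-|A|}}.
$$
(The case $|x|\lesssim N$ has to be handled separately, but there the desired inequality reduces to the trivial pointwise estimate $\int \prod|f_j|\big(\sum|x-y_j|\big)^{-nm}\,d\vec y\le C\,\mathcal{M}(\vec f)(x)$ together with $|x|^{-|A|}\gtrsim N^{-|A|}$ being bounded below — so it reduces, up to constants, to the boundedness already implicit in Lemma \ref{yl2-ch7}-type estimates.)

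First I would split $\R^{nm}=\{|x|\le 2N\}\cup\{|x|>2N\}$ in the variable $x$ — or rather, argue pointwise in $x$ in these two regimes. In the regime $|x|\le 2N$ the factor $|x|^{-|A|}$ is not helpful, and instead I would bound $\prod_{k\in A}|f_k(y_k)|$ by noting $\int_{B(0,N)}|f_k(y_k)|\,dy_k=\|f_k\|_{L^1}$ is harmless only after combining with the maximal function; here the cleanest route is to observe that the full integral is $\le C\,\mathcal{M}(\vec f)(x)$ by the standard kernel-size estimate (the $\delta\to\infty$ version of Lemma \ref{yl2-ch7}), and then to note that $\mathcal M(\vec f)(x)\le N^{-|A|}\prod_{k\in A}\|f_k\|_{L^1}\,\mathcal M_{A^c}(\vec f)(x)\cdot(\text{something})$ — wait, this is not quite an identity, so more carefully: for $|x|\le 2N$ one has $|x|^{-|A|}\ge (2N)^{-|A|}$, and one estimates $\mathcal M_A(\vec f)(x)$ from above by taking the cube $Q=B(0,3N)\supset \supp f_k$, giving $\prod_{k\in A}\frac{1}{|Q|}\int_Q|f_k|\le C N^{-n|A|}\prod_{k\in A}\|f_k\|_{L^1}$; combining $\mathcal M(\vec f)=\mathcal M_A(\vec f)\cdot\mathcal M_{A^c}(\vec f)$ pointwise-dominates appropriately. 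In the regime $|x|>2N$, I would apply the geometric observation above to pull out $|x|^{-|A|}$, then bound $\prod_{k\in A}|f_k(y_k)|$ crudely and integrate those variables out against the (now reduced-power) kernel, keeping $\big(\sum_{j\in A^c}|x-y_j|\big)^{-n|A^c|}$ to feed into $\mathcal M_{A^c}$; the integrals in the $y_k$, $k\in A$, converge because the exponent $nm-|A|$ against $|A|$ free variables of dimension $n$ each is still integrable near and away from $x$ (again a Lemma \ref{yl2-ch7}-style computation, or a direct polar-coordinates estimate after using $\sum_{j=1}^m|x-y_j|\ge \sum_{k\in A}|x-y_k|$).

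The main obstacle I expect is bookkeeping the interplay between the two factors — the $L^1$ norms over $A$ and the maximal function over $A^c$ — uniformly in both regimes of $|x|$, and in particular checking that the constant genuinely depends only on $N$ (and $n,m,|A|$) and not on the individual $f_j$. A secondary technical point is justifying the integrability of the reduced kernel $\big(\sum_j|x-y_j|\big)^{-(nm-|A|)}$ in the $|A|$ variables $\{y_k\}_{k\in A}$ when restricted to $B(0,N)^{|A|}$: this is where one really uses the compact support rather than any decay, so the bound is $\int_{B(0,N)^{|A|}}\prod_{k\in A}|f_k(y_k)|\,dy_k\le \prod_{k\in A}\|f_k\|_{L^1}$ combined with $\big(\sum_j|x-y_j|\big)^{-(nm-|A|)}\le\big(\sum_{j\in A^c}|x-y_j|\big)^{-n|A^c|}\cdot(\text{bounded, since the omitted terms only help})$ — after which the remaining integral over $\{y_j\}_{j\in A^c}$ is exactly of the form controlled by $\mathcal M_{A^c}(\vec f)(x)$ via the same argument as in the standard multilinear Calderón–Zygmund maximal estimate. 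Once these pieces are assembled the claimed inequality follows.
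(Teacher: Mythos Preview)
Your argument for the regime $|x|>2N$ has a genuine gap at the final step. After integrating out the variables $y_k$, $k\in A$, and discarding the $A$-terms from the denominator, you are left with
\[
\int_{\R^{n|A^c|}}\frac{\prod_{j\in A^c}|f_j(y_j)|}{\big(\sum_{j\in A^c}|x-y_j|\big)^{n|A^c|}}\,\prod_{j\in A^c}dy_j,
\]
which you claim is controlled by $\mathcal{M}_{A^c}(\vec f)(x)$. This is false: the exponent $n|A^c|$ is exactly critical, and the kernel is not locally integrable at the diagonal $y_j=x$ for all $j\in A^c$. Already for $|A^c|=1$ the integral $\int |f(y)|\,|x-y|^{-n}\,dy$ is not dominated by $Mf(x)$ (take $f=\chi_{B(x,1)}$ and it diverges). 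The ``standard multilinear Calder\'on--Zygmund maximal estimate'' you invoke requires a regularized kernel, and that regularization is precisely what you threw away when you bounded $\sum_{j=1}^m|x-y_j|\ge \sum_{j\in A^c}|x-y_j|$.

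The correct route, which is what the paper does, is to retain the $A$-contribution in the denominator as a regularizer: since $|x-y_k|\ge |x|/2$ for $k\in A$ on the support, one has $\sum_{j=1}^m|x-y_j|\gtrsim |x|+\sum_{j\in A^c}|x-y_j|$. After integrating out the $y_k$, $k\in A$, the remaining kernel is $\big(|x|+\sum_{j\in A^c}|x-y_j|\big)^{-nm}$, and a dyadic decomposition into shells $\sum_{j\in A^c}|x-y_j|\sim 2^k|x|$ then yields the bound $C|x|^{-n|A|}\mathcal{M}_{A^c}(\vec f)(x)$ with a convergent geometric series. Two side remarks: the paper asserts and uses the lemma only for $|x|\ge 2N$, so your small-$|x|$ discussion is unnecessary; and in that discussion the step ``$\int\prod|f_j|\big(\sum|x-y_j|\big)^{-nm}d\vec y\le C\mathcal{M}(\vec f)(x)$'' suffers from the same critical-exponent failure.
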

\begin{proof}
Since $\supp f_k\subset B(0,N)$, $k\in A$, $|x|\geq 2N$, then
\begin{align*}
\int_{\R^{nm}}\frac{\prod_{j=1}^m|f_j(y_j)|}
{\big(\sum_{j=1}^m|x-y_j|\big)^{nm}}d\vec y\leq C\prod_{j\in A}\|f_j\|_{L^1}\int_{\R^{n|A^c|}}\frac{\prod_{j\in A^c}|f_j(y_j)|}
{\big(|x|+\sum_{j\in A^c}|x-y_j|\big)^{nm}}\prod_{j\in A^c}dy_j.
\end{align*}
So it suffices to show that
\begin{align*}
I:=\int_{\R^{n|A^c|}}\frac{\prod_{j\in A^c}|f_j(y_j)|}
{\big(|x|+\sum_{j\in A^c}|x-y_j|\big)^{nm}}\prod_{j\in A^c}dy_j
\leq \frac{C}{|x|^{n|A|}}\mathcal{M}_{A^c} (\vec f)(x).
\end{align*}
Decomposing $\R^{n|A^c|}$, we get
\begin{align*}
I&\leq \frac{C}{|x|^{mn}}
\int_{\sum_{j\in A^c}|x-y_j|\leq |x|}\prod_{j\in A^c}|f_j(y_j)|dy_j\\
&\quad+\sum_{k=0}^{\infty}\frac{C}{(2^k|x|)^{mn}}\int_{2^k|x|\leq\sum_{j\in A^c}|x-y_j|\leq 2^{k+1}|x|}\prod_{j\in A^c}|f_j(y_j)|\prod_{j\in A^c}dy_j\\
&\leq \frac{C}{|x|^{mn}}\prod_{j\in A^c}
\int_{B(x,|x|)}|f_j(y_j)|dy_j+\sum_{k=0}^{\infty}\frac{C}{(2^k|x|)^{mn}}\prod_{j\in A^c}\int_{B(x,2^{k+1}|x|)}|f_j(y_j)|dy_j\\
&\leq \frac{C}{|x|^{n|A^c|}}\prod_{j\in A^c}\frac1{|x|^n}
\int_{B(x,|x|)}|f_j(y_j)|dy_j\\
&\quad+\frac{C}{|x|^{n|A^c|}}\sum_{k=0}^{\infty}\frac1{2^{k|A|}}
\prod_{j\in A^c}\frac{1}{(2^k|x|)^n}\int_{B(x,2^{k+1}|x|)}|f_j(y_j)|dy_j.
\end{align*}
By the definition of $\mathcal{M}_{A^c} (\vec f)$, one can get Lemma \ref{yl3-ch7}.
\end{proof}
Now take $u\in C^{\infty} ([0,\infty))$ and 
$\chi_{[1,\infty)}\leq u(t)\leq \chi_{[1/2,\infty)}$. It follows that 
$|1-u(t)|\leq \chi_{(0,1)}(t)$ when $t>0$.
Denote
\begin{align*}
&T_{\vec b,S,\delta} (f_1,\dots,f_m)(x)
\\
&=\int_{\mathbb{R}^n}u(\frac{\sum_{i=1}^m|x-y_i|}{\delta})
\prod_{(i,j)\in S} (b_i(x)-b_i(y_j))K(x,y_1,\dots,y_m)
\prod_{j=1}^mf_j(y_j)dy_1\dots dy_m.
\end{align*}

\begin{lem} \label{5.3}
	Let $B$ be a Banach space, $T$ be a $B$-valued $m$-linear Calder\'{o}n-Zygmund 
	operator, $1< p_{i}<\infty$, and $\frac{1}{p}=\frac{1}{p_{1}}
	+\cdots+\frac{1}{p_{m}}$. 
	Let $S$ be a finite set in $\mathbb Z^+\times \{1,\dots,m\}$, 
	$A=\{j\in\{1,\dots,m\}: (i,j)\in S\text{ for some }i\in\mathbb Z^+\}$ and 
	$S_j=\{i:(i,j)\in S\}$ for $1\le j\le m$.  
	Then, 
	if $\vec{\omega}\in A_{\vec{p}}$, $\vec\omega_{A^c}\in A_{\vec p,A^c}$, 
	$\nu_{\vec\omega,A}\in A_{p_A|A|}$, and $b_i\in C_c^\infty$ for 
	$i\in \bigcup_{j=1}^m S_j$,
	then $T_{\vec b,S,\delta}$ converges to 
	$T_{\vec b,S}$ in 
	$B(L^{p_1} (\omega_1)\times\dots\times L^{p_m} (\omega_m),
	L^p_B(\nu_{\vec\omega}))$ when $\delta\rightarrow 0$.
\end{lem}
\begin{proof}
	
	It follows that
	\begin{align*}
	&\|T_{\vec b,S} (\vec f)(x)-T_{\vec b,S,\delta} (\vec f)(x)\|_B\\
	&\leq\int_{\sum_{j=1}^m|x-y_j|\leq \delta}\prod_{(i,j)\in S} |b_i(x)-b_i(y_j)|\|K(x,y_1,\dots,y_m)\|_B|\prod_{j=1}^mf_j(y_j)|dy_1\dots
	dy_m.
	\end{align*}
	Let $(i_0, j_0)\in S$, then
	\begin{align*}
	&\|T_{\vec b,S} (\vec f)(x)-T_{\vec b,S,\delta} (\vec f)(x)\|_B
	\leq C\|\nabla b_{i_0}\|_{L^{\infty}}
	\int_{\sum_{j=1}^m|x-y_j|\leq \delta}\frac{\prod_{j=1}^m|f_j(y_j)|}{\big(\sum_{j=1}^m|x-y_j|\big)^{nm-1}}d\vec y.
	\end{align*}
	By Lemma \ref{yl2-ch7}, one may obtain that
	\begin{align}\label{deltaboud}
	&\|T_{\vec b,S} (\vec f)(x)-T_{\vec b,S,\delta} (\vec f)(x)\|_B\leq C\|\nabla b_{i_0}\|_{L^{\infty}}\delta\mathcal{M} (\vec f)(x).
	\end{align}
	By Theorem \ref{thm-loptt-1}, Lemma \ref{5.3} is proved.
\end{proof}
Note that if $T_n\in K(X_1\times\dots\times X_m,Y)$, $n=1,\dots$, and $T_n$ 
converge to some $T\in B(X_1\times\dots\times X_m,Y)$ in 
$B(X_1\times\dots\times X_m,Y)$, then
$T\in K(X_1\times\dots\times X_m,Y)$. 
In order to prove Theorem \ref{thmmain}, we may assume $b_i\in C_c^\infty$ for 
$i\in \bigcup_{j=1}^m S_j$ by Theorem \ref{thm-bounds-vec-com} and 
the density of $C_c^\infty$ in $CMO$. 
Therefore, we only need to prove under 
the conditions of Theorem \ref{thmmain}, for any 
$\delta>0$, $T_{\vec b,S,\delta}$ is a compact operator from 
$L^{p_1} (\omega_1)\times\dots\times L^{p_m} (\omega_m)$ to 
$L^p_B(\nu_{\vec\omega})$.

By Lemma \ref{5.3} and Theorem \ref{thm-comp-lpomega}, it suffices to show the following theorem:
\begin{thm}\label{dl5-ch7} Let $T_{\vec b,S,\delta}$ be defined as the same in Lemma \ref{5.3}. Then
\begin{enumerate}
\item[\emph{(i)}] There exists constant $C>0$, such that for any $f_i\in C^{\infty}_c$, $i=1,\dots, m$,
\begin{align}\label{inthm-bounds-Tdeltab}
\|T_{\vec b,S,\delta} (\vec f)\|_{L^p_B(\nu_{\vec\omega})}\leq C\prod_{j=1}^m\|f_j\|^p_{L^p(\omega_i)};
\end{align}
\item[\emph{(ii)}]For any $\epsilon>0$, there exists constant $N>0$, such that for any $f_i\in C^{\infty}_c$, $i=1,\dots, m$,
$$
\int_{|x|\geq N}\|T_{\vec b,S,\delta} (\vec f)(x)\|_B^p\nu_{\vec\omega}dx\leq 
\epsilon\prod_{j=1}^m\|f_j\|_{L^p(\omega_i)};
$$
\item[\emph{(iii)}]For any $\epsilon>0$, there exists constant $N>0$, 
such that for any $f_i\in C^{\infty}_c$, $i=1,\dots, m$, $|t|<\delta$,
$$
\int_{\mathbb{R}^n}\|T_{\vec b,S,\delta} (\vec f)(x+t)-T_{\vec b,S,\delta} 
(\vec f)(x)\|_B^p\nu_{\vec\omega}dx\leq \epsilon
\prod_{j=1}^m\|f_j\|^p_{L^p(\omega_i)}.
$$
\end{enumerate}
\end{thm}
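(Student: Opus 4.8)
The plan is to establish the three conditions of Theorem \ref{dl5-ch7} by exploiting the truncation $u(\sum|x-y_i|/\delta)$ in the definition of $T_{\vec b,S,\delta}$, which forces the kernel to be integrated only over the region $\sum_{j}|x-y_j|\geq \delta/2$. This single feature is what makes all three estimates work, because on that region the singular kernel $\|K\|_B\lesssim (\sum|x-y_j|)^{-mn}$ becomes harmless after one pulls out the BMO/Lipschitz factors $|b_i(x)-b_i(y_j)|$. Since each $b_i\in CMO$ can, for the purpose of these estimates, be approximated in $BMO$ by a $C^\infty_c$ function (or by a Lipschitz function), I would first reduce to the case where every $b_i$ is Lipschitz with compact support, so that $|b_i(x)-b_i(y_j)|\leq \|\nabla b_i\|_\infty|x-y_j|$; the general $CMO$ case then follows by the density argument together with the uniform boundedness already recorded in Theorem \ref{thm-bounds-vec-com}. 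This reduction is the conceptual heart, and it should be spelled out once at the beginning so that parts (i)--(iii) can all assume Lipschitz symbols.

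For part (i), the uniform boundedness, I would simply combine the pointwise estimate \eqref{deltaboud} — which gives $\|T_{\vec b,S,\delta}(\vec f)(x)\|_B\leq C\delta\,\mathcal{M}(\vec f)(x)$ when at least one symbol is Lipschitz — with the weighted maximal function bound in Theorem \ref{thm-loptt-1}; for $CMO$ symbols one instead uses Theorem \ref{thm-bounds-vec-com} directly, since the truncated operator is dominated by the full commutator plus the error \eqref{deltaboud}. For part (ii), the tail decay, the key observation is that when $\vec f$ has support in a fixed ball and $|x|$ is large, the truncation region $\sum|x-y_j|\geq\delta/2$ together with the support condition on the $f_j$ forces $\sum|x-y_j|\gtrsim |x|$, so Lemma \ref{yl3-ch7} applies with $A=\{1,\dots,m\}$ (or any nonempty $A$ as dictated by the weight hypotheses $\vec\omega_{A^c}\in A_{\vec p,A^c}$, $\nu_{\vec\omega,A}\in A_{p_A|A|}$) and yields a factor $|x|^{-|A|}$ that is integrable against $\nu_{\vec\omega}$ once one also uses the $L^1$-norm control on the $f_j$ in $A$ and the weighted maximal bound on $\mathcal{M}_{A^c}$. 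Truncating the $C^\infty_c$ data to a large ball first (at the cost of an error controlled by part (i) and absolute continuity of the integral) makes the support hypothesis of Lemma \ref{yl3-ch7} legitimate.

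Part (iii), the equicontinuity of translates, is the main obstacle and will require the most care. The difference $T_{\vec b,S,\delta}(\vec f)(x+t)-T_{\vec b,S,\delta}(\vec f)(x)$ splits into several pieces: one where the difference falls on the smooth cutoff $u(\cdot/\delta)$, one where it falls on the kernel $K$, and one where it falls on the commutator factors $\prod(b_i-b_i(y_j))$. The cutoff term is supported where $\sum|x-y_j|$ is comparable to $\delta$ and contributes a factor $|t|/\delta$ times an integral of $(\sum|x-y_j|)^{-mn+1}$-type, handled by Lemma \ref{yl2-ch7}; the kernel-difference term uses the Hölder smoothness estimate (iii) for the $B$-valued kernel, gaining $|t|^\varepsilon$, and after splitting the $\vec y$-integral near and far from $x$ one again invokes Lemma \ref{yl2-ch7} to bound everything by $|t|^\varepsilon\,\mathcal{M}(\vec f)(x)$; the symbol-difference term uses the Lipschitz bound $|b_i(x+t)-b_i(x)|\leq\|\nabla b_i\|_\infty|t|$. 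In each case one lands on a pointwise bound of the form $C(|t|^{\min(1,\varepsilon)}\delta^{-\text{something}})\,\mathcal{M}(\vec f)(x)$ (the $\delta$ being fixed), and then Theorem \ref{thm-loptt-1} converts this into the desired $L^p(\nu_{\vec\omega})$ estimate that tends to $0$ as $|t|\to 0$. The delicate bookkeeping is making sure the three pieces are organized so that no term is left with a non-integrable singularity in $\vec y$, and that the implicit constants depend on $\delta$ but not on $\vec f$; since $\delta>0$ is fixed throughout Theorem \ref{dl5-ch7}, such $\delta$-dependence is permissible.
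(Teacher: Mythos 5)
Your overall strategy (reduce to smooth compactly supported symbols, prove uniform boundedness, spatial decay, and translation continuity for the truncated operator, then invoke Theorem \ref{thm-comp-lpomega}) matches the paper, and your parts (i) and (iii) are essentially the paper's argument. But your part (ii) has a genuine gap: the constant $N$ in (ii) must work simultaneously for \emph{all} $f_i\in C^{\infty}_c$, with the bound $\epsilon\prod_j\|f_j\|_{L^{p_j}(\omega_j)}$, i.e.\ the decay at infinity must be uniform over the image of the unit ball. You derive the decay from the supports of the data, after ``truncating the $C^\infty_c$ data to a large ball first''; but the radius of that ball depends on the individual $f_j$, and the truncation error $\|f_j\chi_{\{|y|>R\}}\|_{L^{p_j}(\omega_j)}$ tends to zero only for each fixed $f_j$, not uniformly on the unit ball, so the resulting $N$ depends on $\vec f$ and condition (ii) of Theorem \ref{thm-comp-lpomega} is never verified for the family. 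The same non-uniformity hits the factor $\prod_{j\in A}\|f_j\|_{L^1}$ coming out of Lemma \ref{yl3-ch7}: it is controlled by $\prod_j\|f_j\|_{L^{p_j}(\omega_j)}$ only through H\"older on a \emph{fixed} ball, at the price of $\omega_j^{-p_j'/p_j}(B(0,N))^{1/p_j'}$, and this constant is not uniform if the ball grows with $\supp f_j$.

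The mechanism that actually produces (ii) is the compact support of the \emph{symbols}, not of the data. Once $b_i\in C^{\infty}_c$ with $\supp b_i\subset B(0,N_0)$ for all $i\in\bigcup_j S_j$, then for $|x|\ge 2N_0$ every factor $b_i(x)-b_i(y_j)$ equals $-b_i(y_j)$, so the integrand is automatically localized to $y_j\in B(0,N_0)$ for each $j\in A:=\{j:\,S_j\neq\emptyset\}$, with a ball that is fixed once and for all. Lemma \ref{yl3-ch7} applied with exactly this $A$ gives $\|T_{\vec b,S,\delta}(\vec f)(x)\|_B\le C|x|^{-|A|}\prod_{j\in A}\|f_j\chi_{B(0,N_0)}\|_{L^1}\,\mathcal{M}_{A^c}(\vec f)(x)$; H\"older on the fixed ball, the weighted bound for $\mathcal{M}_{A^c}$ under $\vec\omega_{A^c}\in A_{\vec p,A^c}$, and the integrability of $\nu_{\vec\omega,A}(x)(1+|x|)^{-np_A|A|}$ (valid since $\nu_{\vec\omega,A}\in A_{p_A|A|}$) then yield a tail bound uniform in $\vec f$ and vanishing as $N\to\infty$. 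This is also why the hypotheses single out this particular $A$ and the classes $A_{\vec p,A^c}$ and $A_{p_A|A|}$: the set $A$ is dictated by $S$, not a free parameter as your sketch suggests. One smaller point in (iii): the symbol-difference piece cannot be bounded pointwise by $C_\delta\,\mathcal{M}(\vec f)(x)$, since after removing the symbols the kernel only decays like $(\sum_j|x-y_j|)^{-mn}$ and the dyadic sum over $\{\sum_j|x-y_j|\ge\delta\}$ diverges; after extracting $|b_i(x+t)-b_i(x)|\le |t|\,\|\nabla b_i\|_{L^\infty}$ one must estimate the remaining term in $L^p(\nu_{\vec\omega})$ via the boundedness (i) of the truncated operator applied to $f_j\prod b_i$, which is how the paper closes that step.
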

\begin{proof}
(i) follows directly from \eqref{deltaboud}, Theorem \ref{thm-bounds-vec-com} and Theorem \ref{thm-loptt-1}. Now we prove (ii).
Since $b_i\in C^{\infty}_c(\R ^n)$ for any $i\in\bigcup_{j=1}^mS_j$,
there exists $N_0\in \mathbb{N}$ such that
$\supp b_i \in B(0, N_0)$ for any $i\in \bigcup_{j=1}^mS_j$.
So for $|x|\ge N\ge 2N_0,$ it holds that
\begin{align*}
\|T_{\vec b,S,\delta} (\vec f)(x)\|_B
&\leq C\prod_{(i,j)\in S}\|b_i\|_{L^{\infty}}
\int_{\R^{nm}}\frac{\prod_{j=1}^m|f_j(y_j)|}
{\big(\sum_{j=1}^m|x-y_j|\big)^{nm}}
\prod_{j\in A}\chi_{B(0,N_0)} (|y_j|)d\vec y.
\end{align*}
By Lemma \ref{yl3-ch7}, one obtains that
\begin{align*}
\|T_{\vec b,S,\delta} (\vec f)(x)\|_B
\leq C\prod_{(i,j)\in S}\|b_i\|_{L^{\infty}}\frac{1}{|x|^{n|A|}}
\prod_{j\in A}\|f_j\|_{L^1}\mathcal{M}_{A^c} (\vec f)(x).
\end{align*}
By the fact that $\frac1p=\frac1{p_A}+\frac{1}{p_{A^c}}$, $\nu_{\vec\omega}=\nu_{\vec\omega,A}^{\frac{p}{p_A}}
\nu_{\vec\omega,A^c}^{\frac{p}{p_{A^c}}}$, we have
\begin{align*}
&\big(\int_{|x|\geq N}\|T_{\vec b,S,\delta} (\vec f)(x)\|_B^p\nu_{\vec\omega}dx\big)^{\frac1p}\\
&\leq C\prod_{(i,j)\in S}\|b_i\|_{L^{\infty}}\prod_{j\in A}\|f_j\|_{L^1}\big(\int_{|x|\geq N}\frac{1}{|x|^{|A|p}}|\mathcal{M}_{A^c} (\vec f)(x)|^p\nu_{\vec\omega}dx\big)^{\frac1p}\\
&\leq C\prod_{j\in A}(\omega^{-\frac{p_j'}{p_j}}_j(B(0,N_0)))^{1/p_j'}\prod_{(i,j)\in S}\|b_i\|_{L^{\infty}}\prod_{j\in A}\|f_j\|_{L^{p_j} (\omega_j)}\\
&\quad\times\big(\int_{|x|\geq N}\frac{1}{|x|^{n|A|p_{A}}}\nu_{\vec\omega,A}dx\big)^{\frac1{p_A}}\|\mathcal{M}_{A^c} (\vec f)\|_{L^{p_{A^c}}} (\nu_{\vec\omega,{A^c}}).
\end{align*}
From the assumptions 
$\vec\omega_{A^c}\in A_{\vec p,A^c}$, $\nu_{\vec\omega,A}\in A_{p_A|A|}$, 
it follows 
\begin{align*}
&\big(\int_{|x|\geq N}\|T_{\vec b,S,\delta} (\vec f)(x)\|_B^p\nu_{\vec\omega}dx\big)^{\frac1p}\\
&\leq C\prod_{j\in A}(\omega^{-\frac{p_j'}{p_j}}_j(B(0,N_0)))^{1/p_j'}\prod_{(i,j)\in S}\|b_i\|_{L^{\infty}}\prod_{j=1}^m\|f_j\|_{L^{p_j} (\omega_j)}\big(\int_{|x|\geq N}\frac{1}{|x|^{n|A|p_{A}}}\nu_{\vec\omega,A}dx\big)^{\frac1{p_A}}.
\end{align*}
Note for any $1<q<\infty$, any $\omega\in A_q$, it holds that
\begin{align*}
\int_{\mathbb{R}^n}\frac{\omega(x)}{(1+|x|)^{nq}}dx<\infty.
\end{align*}
and hence $\int_{|x|\geq N}\frac{1}{|x|^{n|A|p_{A}}}\nu_{\vec\omega,A}dx\rightarrow 0$ as $N\rightarrow \infty.$
Indeed, since $\nu_{\vec\omega,A}\in A_{p_A|A|}$, there exists $1<r_A<p_A|A|$ such that 
$\nu_{\vec\omega,A}\in A_{r_A}$. So we get

$$\aligned
\int_{|x|\ge N } \frac {\nu_{\vec\omega,A}(x)dx}{|x|^{np_A|A|}}&=
\sum_{k=0}^\infty \int_{2^kN\le |x|<2^{K+1}N}  \frac {\nu_{\vec\omega,A}(x)dx}{|x|^{np_A|A|}}\\
&\le \sum_{k=0}^\infty\frac 1{(2^kN)^{np_A|A|}} \int_{ |x|<2^{k+1}N}  {\nu_{\vec\omega,A}(x)dx}.
\endaligned$$
It is known that if $1\le p <\infty$ and $\omega\in A_p(\mathbb R^n)$, then 
$\omega(\lambda B)\le [\omega]_{A_p} \lambda^{np} \omega(B)$ for any $\lambda>0$ and any 
ball $B$.  (see, for example, Proposition 7.1.5 (9), page 504, Grafakos book , Classical Fourier Analysis.)
Using this, we  get
$$\aligned
\sum_{k=0}^\infty\frac 1{(2^kN)^{np_A|A|}} \int_{ |x|<2^{k+1}N}  {\nu_{\vec\omega,A}(x)dx}
&\le \sum_{k=0}^\infty \frac {[\nu_{\vec\omega,A}]_{A_{r_A}}}{(2^kN)^{np_A|A|}}(2^{k+1}N)^{nr_A}\nu_{\vec\omega,A}(B(0,1))\\&\le C\frac {1}{N^{np_A|A|-nr_A}}.
\endaligned$$

Thus, (ii) is obtained.

So, it suffices to show (iii). Note that
\begin{align*}
&\|T_{\vec b,S,\delta} (f_1,\dots,
 f_m)(x+t)-T_{\vec b,S,\delta,} (f_1,\dots,
 f_m)(x)\|_B\\
 &=\|\int_{\mathbb{R}^n}\big(\prod_{(i,j)\in S} (b_i(x+t)-b_i(y_j))K_{\delta} (x+t,y_1,\dots,y_m)-\\
&\quad\times\prod_{(i,j)\in S} (b_i(x)-b_i(y_j))K_{\delta} (x,y_1,\dots,y_m)\big)f_1(y_1)\dots f_m(y_m)d\vec y\|_B\\
&\leq I+II,\end{align*}
where
\begin{align*}
&I=\|\int_{\mathbb{R}^n}\big(\prod_{(i,j)\in S} (b_i(x+t)-b_i(y_j))-\prod_{(i,j)\in S} (b_i(x)-b_i(y_j))\big)K_{\delta} (x,y_1,
\dots,y_m)\\
&\quad \times f_1(y_1)\dots f_m(y_m)d\vec y\|_B,\\
&II=\|\int_{\mathbb{R}^n}\prod_{(i,j)\in S} (b_i(x+t)-b_i(y_j))\big(K_{\delta} (x+t,y_1,\dots,y_m)-K_{\delta} (x,y_1,
\dots,y_m)\big)\\
&\quad \times f_1(y_1)\dots f_m(y_m)d\vec y\|_B.
\end{align*}
Note that
\begin{align*}
&\prod_{(i,j)\in S} (a_{i,j}+b_{i,j})-\prod_{(i,j)\in S}b_{i,j}
=\sum_{D\subsetneq S}\prod_{(i,j)\in D}b_{i,j}\prod_{(i,j)\in S\setminus D}a_{i,j}.
\end{align*}
Let $a_{i,j}=b_i(x+t)-b_i(x),b_{i,j}=b_i(x)-b_i(y_j)$, one has
\begin{align*}
&\prod_{(i,j)\in S} (b_i(x+t)-b_i(y_j))-\prod_{(i,j)\in S} (b_i(x)-b_i(y_j))\\
&\quad=\sum_{D\subsetneq S}\prod_{(i,j)\in D} (b_i(x)-b_i(y_j))\prod_{(i,j)\in S\setminus D} (b_i(x+t)-b_i(x)).
\end{align*}
Therefore, we may continue to estimate $I.$
\begin{align*}
I\leq\sum_{D\subsetneq S}\prod_{(i,j)\in S\setminus D}|b_i(x+t)-b_i(x)|\|\int_{\mathbb{R}^n}&\prod_{(i,j)\in D} (b_i(x+t)-b_i(y_j))K_{\delta} (x,y_1,
\dots,y_m)\\
&\quad \quad \quad f_1(y_1)\dots f_m(y_m)d\vec y\|_B.
\end{align*}
Furthermore, since
\begin{align*}
\prod_{(i,j)\in D} (b_i(x+t)-b_i(y_j))=\sum_{E\subseteq D} (-1)^{|E|^c}\prod_{(i,j)\in E}b_i(x+h)\prod_{(i,j)\in D\setminus E}b_i(y_j),\end{align*}
we have
\begin{align*}
I\leq&\sum_{D\subsetneq S}\sum_{E\subseteq D}\prod_{(i,j)\in S\setminus D}|b_i(x+t)-b_i(x)|\prod_{(i,j)\in E}|b_i(x+h)|\\
&\quad\times\|\int_{\mathbb{R}^n}K_{\delta} (x,y_1,
\dots,y_m)\prod_{(i,j)\in D\setminus E}b_i(y_j)\prod_{j=1}^mf_j(y_j)d\vec y\|_B\\
&=\sum_{D\subsetneq S}\sum_{E\subseteq D}\prod_{(i,j)\in S\setminus D}|b_i(x+t)-b_i(x)|\prod_{(i,j)\in E}|b_i(x+h)|\\
&\quad \times\|T_{\delta} (f_1\prod_{i:(i,1)\in D\setminus E}b_i,\dots,f_m\prod_{i:(i, m)\in D\setminus E}b_i)(x)\|_B.
\end{align*}
By \eqref{inthm-bounds-Tdeltab}, we have 
\begin{align}
\|I\|_{L^p(\nu_{\vec\omega})}&\leq\sum_{D\subsetneq S}\sum_{E\subseteq D}\prod_{(i,j)\in S\setminus D}|t|\|\nabla b_i\|_{L^{\infty}}\prod_{(i,j)\in E}\|b_i\|_{L^{\infty}}\nonumber\\
&\quad \times\|T_{\delta} (f_1\prod_{i:(i,1)\in D\setminus E}b_i,\dots,f_m\prod_{i:(i, m)\in D\setminus E}b_i)(x)\|_{L^p_B(\nu_{\vec\omega})}\nonumber\\
&\leq C\sum_{D\subsetneq S}\sum_{E\subseteq D}\prod_{(i,j)\in S\setminus D}|t|\|\nabla b_i\|_{L^{\infty}}\prod_{(i,j)\in E}\|b_i\|_{L^{\infty}}\prod_{j=1}^m\|f_j\prod_{i:(i,j)\in D\setminus E}b_i\|_{L^{p_j} (\omega_j)}\nonumber\\
&\leq C\sum_{D\subsetneq S}\prod_{(i,j)\in S\setminus D}|t|\|\nabla b_i\|_{L^{\infty}}\prod_{(i,j)\in D}\|b_i\|_{L^{\infty}}\prod_{j=1}^m\|f_j\|_{L^{p_j} (\omega_j)}. \label{ineq2-ch7}
\end{align}
If $|t|\leq \frac12\delta$, the smoothness of $K_{\delta}$ yields that
\begin{align*}
&II\leq C\prod_{(i,j)\in S}\|b_i\|_{L^{\infty}}|t|\int_{\sum_{j=1}^m|x-y_j|\geq \delta}\frac{\prod_{j=1}^m|f_j(y_j)|}{\big(\sum_{j=1}^m|x-y_j|\big)^{nm+1}}d\vec y.
\end{align*}
By Lemma \ref{yl2-ch7}, we have
\begin{align*}
&II\leq C\prod_{(i,j)\in S}\|b_i\|_{L^{\infty}}|t|\frac 1\delta\mathcal{M} (\vec f)(x).
\end{align*}
Thus
\begin{align}\label{ineq3-ch7}
\|II\|_{L^p(\nu_{\vec\omega})}&\leq
C\prod_{(i,j)\in S}\|b_i\|_{L^{\infty}}|t|\frac 1\delta
\prod_{j=1}^m\|f_j\|_{L^{p_j} (\omega_j)}.
\end{align}
(iii) follows by combining (\ref{ineq2-ch7}) and (\ref{ineq3-ch7}). Hence, we completed the proof of Theorem \ref{dl5-ch7}, and finished the proof of Theorem \ref{thmmain}.
\end{proof}

\end{document}